\title{A potential theory for the Wess--Zumino--Witten equation in the space of K\"ahler potentials}
\author{Kuang-Ru Wu}
\newcommand{\RN}[1]{%
  \textup{\uppercase\expandafter{\romannumeral#1}}%
}
\theoremstyle{plain}
\newtheorem{conjecture}[theorem]{Conjecture}
\numberwithin{equation}{section}
\begin{document}

\date{}

\parskip=6pt

\maketitle

\begin{abstract}
We develop a potential theory for the Wess--Zumino--Witten (WZW) equation in the space of K\"ahler potentials which is parallel to the potential theory for the Hermitian--Yang--Mills equation. A concept called $\omega$-harmonicity on graphs is introduced which characterizes the WZW equation. We also show that, with respect to a Banach--Mazur type distance function, the distance between two solutions of the WZW equation is subharmonic. 

The harmonic map into the space of K\"ahler potentials, as a special case of the WZW equation, is also investigated. In particular, we show the solvability of the Dirichlet problem for the harmonic map, and the approximation/quantization by its finite dimensional counterparts.   
\end{abstract}

\section{Introduction}

Let $X$ be a compact complex manifold of dimension $n$ with a K\"ahler form $\omega$. The space of K\"ahler potentials is $\mathcal H_{\omega}=\{\phi\in C^\infty(X,\mathbb{R}): \omega_\phi=\omega +i\partial\Bar{\partial }\phi>0 \}$. In the study of canonical metrics in K\"ahler geometry, the geodesic equation in $\mathcal{H}_\omega$ is indispensable (\cite{Semmesmonge,Donaldson99,XXChen}). One generalization of the geodesic equation is the Wess--Zumino--Witten (WZW) equation. For a map $\Phi$ from an open subset $ D\subset \mathbb{C}^m$ to $ \mathcal H_\omega$, the WZW equation is 
\begin{equation}\label{WZW}  
\sum^m_{j=1}|\nabla\Phi_{z_j}|_{\Phi}^2-2\Phi_{z_j\bar{z}_j}+i\{\Phi_{\bar{z}_j},\Phi_{{z_j}}\}_{\Phi}=0,  
\end{equation} 
where $\{z_j\}$ are coordinates on $D$ and $|\nabla \Phi_{z_j}(z)|_{\Phi}^2$ is computed using the metric $\omega_{\Phi(z)}$, and $\{\cdot ,\cdot \}_{\Phi(z)}$ is the Poisson bracket determined by the symplectic form $\omega_{\Phi(z)}$. An equivalent way of writing the WZW equation is, viewing $\Phi$ as a function $D\times X \to \mathbb{R}$,
$$(i\partial \bar{\partial}\Phi+\pi^*\omega)^{n+1}\wedge(i\sum_{j=1}^m dz_j\wedge d\bar{z}_j)^{m-1} = 0,$$
where $\pi$ is the projection from $D\times X$ to $X$.
For motivation and application of the WZW equations, see \cite{Semmesmonge,Donaldson99,RubinsteinZelditch,LempertLegendre,Wu23, finski2024lower, finski2024harder}.

Another generalization of the geodesic equation is the harmonic map equation. For an open set $D$ in $\mathbb{C}^m$ with the Euclidean metric and $\mathcal{H}_\omega$ with the Mabuchi metric, the harmonic map equation for $\Phi:D\to \mathcal H_\omega$ is the following  
\begin{equation}\label{har}
\sum^m_{j=1}|\nabla\Phi_{z_j}|_{\Phi}^2-2\Phi_{z_j\bar{z}_j}=0.    
\end{equation} We will study the WZW equations first and then apply the results to the harmonic map equations. In fact, we will consider the more general case where the harmonic map equations have domains in $\mathbb{R}^m$ (see Theorem \ref{thm weak harmonic}).

In \cite{Wu23}, we show the existence of weak solutions to the Dirichlet problem for the WZW equation by interpolation. One purpose of this paper is to understand the weak solution through potential theory. Since the WZW equations can be approximated by the Hermitian--Yang--Mills (HYM) equations (\cite[Theorem 1.2]{Wu23}) and the potential theory for HYM equations was established already by Rochberg, Slodkowski, Coifman and Semmes in \cite{Rochberg,SlodI,CoifmanSemmes}, it is natural to expect some potential theory for the WZW equations. In this paper, we develop such a potential theory that is almost analogous to the HYM potential theory. 

Let us recall the following definition introduced in \cite{Wu23} 
\begin{definition}\label{sub graphs}
 An upper semicontinuous function $u:D\times X\to [-\infty, \infty)$ is said to be $\omega$-subharmonic on graphs if, for any holomorphic map $f$ from an open subset of $D$ to $X$, $\psi(f(z))+u(z,f(z))$ is subharmonic, where $\psi$ is a local potential of $\omega$.
\end{definition}
In \cite{Wu23}, we did not include upper semicontinuity in the definition. We do so in this paper to make the presentation cleaner. Definition \ref{sub graphs} corresponds to the subsolution of the WZW equation (see \cite[Lemma 4.1]{Wu23} or Lemma \ref{+ det} below). To study the supersolution, we introduce the idea of $\omega$-superharmonicity on graphs.

\begin{definition}\label{super}
    A lower semicontinuous function $u:D\times X\to (-\infty, \infty]$ is said to be $\omega$-superharmonic on graphs if for any $v$ that is $\omega$-subharmonic on graphs in $U\times X$ with $U$ an open subset of $D$, the function $h(z):=\sup_{x\in X}v(z,x)-u(z,x)$ is subharmonic in $U$. 
\end{definition}
A function $u$ is said to be $\omega$-harmonic on graphs if it is both $\omega$-subharmonic and $\omega$-superharmonic on graphs. These definitions are motivated by the study of interpolation problems in \cite{Rochberg,SlodI,SlodII,SlodIII,RevSemmes,CoifmanSemmes}. The idea of $\omega$-harmonicity on graphs is exactly what we need in order to develop a potential theory for the WZW equations, and it greatly refines the results in \cite{Wu23}. Especially, the solutions to the WZW equation can be characterized by $\omega$-harmonicity on graphs: 
\begin{theorem}\label{characterize}
Suppose $u$ is a $C^2$ function on $D\times X$ and $\omega +i\partial\Bar{\partial}u(z,\cdot)>0$ on $X$ for all $z\in D$. Then $u$ is $\omega$-harmonic on graphs if and only if $u$ solves the WZW equation
$$(i\partial \bar{\partial}u+\pi^*\omega)^{n+1}\wedge(i\sum_{j=1}^m dz_j\wedge d\bar{z}_j)^{m-1} = 0.$$
\end{theorem}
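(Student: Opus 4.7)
The strategy is to leverage Lemma \ref{+ det} (that is, \cite[Lemma 4.1]{Wu23}), which for a $C^2$ function $u$ with $\omega+i\partial\bar\partial u(z,\cdot)>0$ identifies the property of being $\omega$-subharmonic on graphs with the pointwise inequality $(i\partial\bar\partial u+\pi^*\omega)^{n+1}\wedge(i\sum_j dz_j\wedge d\bar z_j)^{m-1}\geq 0$. This immediately hands us the $\omega$-subharmonic half of each direction, so the whole theorem reduces to matching $\omega$-superharmonicity on graphs with the reverse inequality.

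For the direction WZW $\Rightarrow$ $\omega$-superharmonic, the crucial step is the construction of \emph{harmonic graphs}: for a $C^2$ WZW solution $u$ and any $(z_0,x_0)\in D\times X$, there exists a holomorphic $f:B(z_0,\epsilon)\to X$ with $f(z_0)=x_0$ such that $z\mapsto\psi(f(z))+u(z,f(z))$ is harmonic. Expanding $\Delta[\psi\circ f+u(\cdot,f)]$ at $z_0$ by the chain rule produces a real quadratic form in the Jacobian $F=df(z_0)$; completing the square against the positive definite block $(\psi_{\alpha\bar\beta}+u_{\alpha\bar\beta})$ shows its constant term is a positive multiple of the WZW quantity, so the WZW equation is exactly the solvability of the first-jet system. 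Extension of the critical jet to a local holomorphic $f$ follows from the implicit function theorem, the linearized operator being elliptic by virtue of $\omega_u|_X>0$. Given the harmonic graph, $\omega$-superharmonicity is straightforward: for $v$ $\omega$-subharmonic on graphs in $U$ and any $z_0\in U$, compactness of $X$ with upper semicontinuity of $v-u$ yields $x_0$ realizing $h(z_0):=\sup_{x}(v-u)(z_0,x)$; with $f$ the harmonic graph through $(z_0,x_0)$,
\[
g(z):=v(z,f(z))-u(z,f(z))=\bigl[\psi(f(z))+v(z,f(z))\bigr]-\bigl[\psi(f(z))+u(z,f(z))\bigr]
\]
is the difference of a subharmonic and a harmonic function, hence subharmonic on $B(z_0,\epsilon)$. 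Since $g\leq h$ with $g(z_0)=h(z_0)$, the sub-mean-value inequality for $g$ transfers to $h$ at $z_0$; as $h$ is also upper semicontinuous (sup of a usc function over compact $X$), it is subharmonic on $U$.

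For the converse, assume $u$ is $\omega$-harmonic on graphs and suppose for contradiction that the WZW quantity is strictly positive at some $(z_0,x_0)$. Build a smooth $\mu:X\to\bbr$ with $\omega+i\partial\bar\partial\mu>0$ such that $\mu-u(z_0,\cdot)$ achieves a strict global maximum on $X$ at $x_0$ (a small $\omega$-psh perturbation adjusted by a bump supported near $x_0$). Then $v(z,x):=\mu(x)$ is $\omega$-subharmonic on graphs, so by hypothesis $h(z):=\sup_x[\mu(x)-u(z,x)]$ is subharmonic in $z$. Ellipticity from $\omega_u|_X>0$ together with the implicit function theorem produce a smooth local maximizer $x^*(z)$ near $z_0$ with $x^*(z_0)=x_0$, and $h$ is $C^2$ there. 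Differentiating the criticality identity $(\mu-u)_{,\alpha}(z,x^*(z))\equiv 0$ to solve for $\partial x^*/\partial z$ in terms of $(\omega_u|_X)^{-1}$ and substituting into $\Delta h(z_0)$ reproduces the same Schur-complement quantity arising in the harmonic graph construction, but with opposite sign: $\Delta h(z_0)$ is a strictly negative multiple of the WZW quantity at $(z_0,x_0)$. This contradicts subharmonicity of $h$, so the reverse inequality must hold and $u$ solves the WZW equation.

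The main obstacle is the harmonic graph construction: pointwise algebraic solvability of the first-jet system must be upgraded to an actual holomorphic map on a neighborhood, and all of the analysis rests essentially on the ellipticity $\omega_u|_X>0$. A closely related bookkeeping point in the converse direction is verifying that the envelope/Legendre calculation produces precisely the WZW quantity of (\ref{WZW})--- including the Poisson bracket term, which should emerge from the $\bar z$-dependence of $x^*(z)$--- with the correct strict negative sign.
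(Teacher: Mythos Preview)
Your overall decomposition is sensible, but there is a genuine gap in the forward direction (WZW $\Rightarrow$ $\omega$-superharmonic). You assert the existence of a \emph{holomorphic} $f:B(z_0,\epsilon)\to X$ with $f(z_0)=x_0$ making $z\mapsto\psi(f(z))+u(z,f(z))$ \emph{harmonic on the whole ball}, and you propose to obtain it from the first-jet solvability via ``the implicit function theorem, the linearized operator being elliptic.'' This step does not go through. Unwinding the computation in Lemma~\ref{min lem}, harmonicity of $\psi\circ f+u(\cdot,f)$ forces the first-order system $\partial f/\partial z_j=-A^{-1}B_j|_{(z,f(z))}$, where $A=(\psi_{\mu\bar\lambda}+u_{\mu\bar\lambda})$ and $B_j=(u_{z_j\bar\lambda})$. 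For $m>1$ this is $mn$ complex equations for $n$ holomorphic unknowns, hence overdetermined, and there is no integrability condition supplied by the WZW equation. Even for $m=1$, the right-hand side depends on $(z,\bar z, f,\bar f)$ only in a $C^2$ (real) way, so the equation is incompatible with $\bar\partial f=0$ in general; there is no elliptic boundary value problem here to which the implicit function theorem applies. The paper circumvents this entirely: in Lemma~\ref{- det} one only needs a holomorphic $f$ (in fact an affine one) with $\Delta(\psi\circ f+u(\cdot,f))\big|_{z_0}\le 0$ \emph{at the single point} $z_0$, which Lemma~\ref{min lem} provides, and then establishes subharmonicity of $h$ via the pointwise Blaschke--Privaloff/Saks criterion $\liminf_{r\to0}r^{-2}(\fint_{B(z_0,r)}h-h(z_0))\ge0$ rather than by exhibiting $g=h$ along a graph that is globally subharmonic.

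Your converse direction is in the right spirit but is both more delicate and less justified than the paper's. When you differentiate the criticality condition $(\mu-u)_{,\alpha}(z,x^*(z))=0$, the resulting linear system for $\partial x^*/\partial\bar z_j$ involves the full real Hessian of $\mu-u$ in $x$ (including the $(2,0)$ block $\mu_{\alpha\beta}-u_{\alpha\beta}$), and $\Delta h$ then also picks up terms with $u_{z_j x_\alpha}$; it is not clear that what remains is a clean negative multiple of the Schur complement $u_{j\bar j}-u_{j\bar\lambda}(\psi+u)^{\bar\lambda\mu}u_{\bar j\mu}$ built from $(\psi+u)_{\alpha\bar\beta}$ rather than $(\mu-u)_{\alpha\bar\beta}$. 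The paper's argument avoids this computation altogether: assuming the WZW form is strictly positive near some $p$, one chooses a compactly supported bump $\rho\ge0$ so that $\rho+u$ remains $\omega$-subharmonic on graphs (Lemma~\ref{+ det}), and then $\omega$-superharmonicity of $u$ forces $z\mapsto\sup_{x}\rho(z,x)$ to be subharmonic, which contradicts the maximum principle since this function is nonnegative and compactly supported in $D$.
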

Thus, the study of the WZW equations can be translated to the study of $\omega$-harmonicity on graphs. We would like to emphasize that such a translation from partial differential equations to the potential theory is made possible because of our introduction of $\omega$-harmonicity on graphs, and this is one of the novelties in this paper.    

In the theory of interpolation of norms \cite{Rochberg,SlodI,CoifmanSemmes}, one can pass between subharmonicity and superharmonicity by taking the dual of norms (such duality is proved by Slodkowski in \cite{SlodI,SlodIII}). However, in our case of $\omega$-sub/superharmonicity, the duality is still missing. One possibility is the Legendre transform (\cite{legendredualBCEKR}, see also \cite{LLmathann83,LLduke85,complexinterpolationBCEKR}). We provide some justification and make a conjecture at the end of Section \ref{section charac}.

The concept of $\omega$-harmonicity on graphs is also linked to the geometry of $\mathcal H_\omega$. The space of K\"ahler potentials $\mathcal{H}_\omega$ with the Mabuchi metric is non-positively curved (\cite{Mabuchi,Donaldson99,CalabiChen,ChenSun}), so the distance between two harmonic maps into $\mathcal H_\omega$ is subharmonic (\cite[Chapter II.1 and 2]{CAT0book}). The WZW equations which differ from the harmonic map equations by the Poisson bracket have a similar property, but with a different distance function defined as follows. For two functions $u$ and $v$ on $D\times X$, we define 
\begin{equation}\label{dist}
  d(u,v)=\max (\delta(u,v),\delta(v,u))\,\, \text{  where } \delta(u,v)(z):=\sup_{x\in X} u(z,x)-v(z,x).  
\end{equation}
This distance function is a variant of the Banach-Mazur type distance introduced in \cite[Page 160]{RevSemmes} and \cite[Formulas (8.1) and (8.2)]{CoifmanSemmes}. With respect to (\ref{dist}), we have
\begin{theorem}\label{dist subhar}
    If $u$ and $v$ are $\omega$-harmonic on graphs in $D\times X$, then the distance function $z\mapsto d(u,v)$ is subharmonic in $D$.
\end{theorem}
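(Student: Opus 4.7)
The plan is to observe that the statement follows almost immediately from the definitions, provided we correctly match $\omega$-subharmonicity on graphs of one function with $\omega$-superharmonicity on graphs of the other.

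First I would reduce to showing that each of the two one-sided functions $\delta(u,v)$ and $\delta(v,u)$ is subharmonic in $D$, using the standard fact that the pointwise maximum of two subharmonic functions is subharmonic. Then, to check that $\delta(u,v)(z)=\sup_{x\in X}(u(z,x)-v(z,x))$ is subharmonic, I would apply the definition of $\omega$-superharmonicity on graphs (Definition \ref{super}) to $v$, taking as the test function the given $u$, which is $\omega$-subharmonic on graphs because it is $\omega$-harmonic on graphs. This yields subharmonicity of $z\mapsto \sup_{x\in X}\bigl(u(z,x)-v(z,x)\bigr)=\delta(u,v)(z)$ on $D$. Interchanging the roles of $u$ and $v$ — using that $v$ is $\omega$-subharmonic on graphs and $u$ is $\omega$-superharmonic on graphs — gives subharmonicity of $\delta(v,u)$.

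Before concluding, I would remark that the suprema defining $\delta(u,v)$ and $\delta(v,u)$ are genuinely finite and attained: since $u$ and $v$ are each both $\omega$-sub- and $\omega$-superharmonic on graphs, they are simultaneously upper and lower semicontinuous, hence continuous, and $X$ is compact, so $u-v$ attains its maximum on $\{z\}\times X$ for each $z\in D$. Thus $\delta(u,v)$ and $\delta(v,u)$ are real-valued, and the subharmonicity conclusions are unambiguous. Combining, $d(u,v)=\max(\delta(u,v),\delta(v,u))$ is subharmonic in $D$.

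There is essentially no hard step here: the entire content lies in our having chosen Definition \ref{super} so as to directly encode precisely the inequality needed for this distance estimate. In this sense Theorem \ref{dist subhar} is a sanity check that $\omega$-superharmonicity on graphs is the right dual notion — the only thing I would be careful about is justifying the continuity remark used to ensure finiteness of the suprema, and invoking the standard stability of subharmonicity under pointwise maxima.
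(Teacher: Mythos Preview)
Your proposal is correct and matches the paper's own argument essentially verbatim: apply Definition~\ref{super} with the roles $(u\ \text{sub},\ v\ \text{super})$ and then $(v\ \text{sub},\ u\ \text{super})$ to get subharmonicity of $\delta(u,v)$ and $\delta(v,u)$, then take the max. Your extra remark on continuity and finiteness of the suprema is a nice addition but not needed for the argument as stated in the paper.
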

The proof is a direct consequence of the definition. Indeed, since $v$ is $\omega$-superharmonic on graphs and $u$ is $\omega$-subharmonic on graphs, the function $\delta(u,v)(z)$ is subharmonic in $D$ by Definition \ref{super}. Switching $u$ and $v$, we see that $\delta(v,u)(z)$ is also subharmonic in $D$, and hence $d(u,v)$ is subharmonic in $D$.

According to Theorem \ref{characterize}, functions $\omega$-harmonic on graphs can be viewed as weak solutions for the WZW equations, so Theorem \ref{dist subhar} basically says that the distance between two solutions for the WZW equations is subharmonic.

Let us apply the potential theory above to the Dirichlet problem for the WZW equation. We recall the interpolation of the Dirichlet problem first. Let $D$ be a regular bounded open set in $\mathbb{C}^m$. Let $\nu$ be a continuous map from $\partial D\times X $ to $ \mathbb{R}$ such that $\nu(z,\cdot)\in \text{PSH}(X,\omega)$ for $z\in \partial D$, and denote by $V$ the sup of the Perron family 
\begin{align*}
G_\nu:=\{u \in\textup{usc($D\times X$)}:& \textup{ $u$ is $\omega$-subharmonic on graphs}, \textup{and }\limsup_{D \ni z \to \zeta\in \partial D}u(z,x)  \leq \nu(\zeta,x)\}.
\end{align*} 
We assume here $\omega$ is in an integral class, so that by \cite{Wu23} $V$ is continuous, attains the boundary data $\nu$, and it is a weak solution in the sense that if $V$ is $C^2$ then it solves the WZW equation. (It is likely that the integral assumption on $\omega$ can be removed, but we have not been able to do so. Note also that although in \cite{Wu23} the boundary data is assumed in $C^\infty(\partial D, \mathcal H_\omega)$, the same results still hold for continuous boundary data; the only concern is the use of Tian--Catlin--Zelditch asymptotic theorem in line 14 on page 350 in \cite{Wu23}, but the same estimate can also be obtained by \cite[Lemma 5.10]{DarvasWu}. Yet another improvement from \cite{Wu23} is that the boundary of $D$ is relaxed from strongly pseudoconvex to regular. See Lemma \ref{barrier}). We give a more precise description of $V$ here.
\begin{theorem}\label{sup is har}
    The sup $V$ of the Perron family $G_\nu$ is $\omega$-harmonic on graphs.
\end{theorem}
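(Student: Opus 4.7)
The plan is to verify the two halves of $\omega$-harmonicity on graphs for $V$ separately.

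For $\omega$-subharmonicity on graphs, I would run the standard Perron argument. Fix a holomorphic $f:O\to X$ with $O\subset D$ open and a local potential $\psi$ of $\omega$ covering $f(O)$. For every $u\in G_\nu$, $\psi\circ f+u(\cdot,f(\cdot))$ is subharmonic on $O$, so for any ball $B$ with $\overline{B}\subset O$ and every $z_0\in B$,
$$\psi(f(z_0))+u(z_0,f(z_0))\le \frac{1}{|\partial B|}\int_{\partial B}\bigl[\psi\circ f+u(\cdot,f(\cdot))\bigr]\,d\sigma\le \frac{1}{|\partial B|}\int_{\partial B}\bigl[\psi\circ f+V(\cdot,f(\cdot))\bigr]\,d\sigma.$$
Taking the sup over $u\in G_\nu$ gives the sub-mean-value inequality for $\psi\circ f+V(\cdot,f(\cdot))$, which together with the continuity of $V$ asserted in the excerpt yields subharmonicity.

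For $\omega$-superharmonicity on graphs, fix $v$ that is $\omega$-subharmonic on graphs on $O\times X$ with $O\subset D$ open, and set $h(z)=\sup_{x\in X}[v(z,x)-V(z,x)]$. Upper semicontinuity of $h$ follows from compactness of $X$ and USC of $v-V$ by a standard subsequence argument. To show $h$ is subharmonic on $O$, I would verify the maximum principle: given a ball $B$ with $\overline{B}\subset O$ and a function $H$ harmonic on a neighborhood of $\overline{B}$ with $H\ge h$ on $\partial B$, one must show $H\ge h$ on $B$. The key device is the Perron competitor
$$w(z,x)=\begin{cases}\max\bigl(V(z,x),\,v(z,x)-H(z)\bigr),&(z,x)\in B\times X,\\ V(z,x),&(z,x)\in(D\setminus B)\times X.\end{cases}$$
The inequality $h\le H$ on $\partial B$ forces $v(z_0,x_0)-H(z_0)\le V(z_0,x_0)$ for every $z_0\in\partial B$ and $x_0\in X$, which gives the correct limsup behavior across $\partial B\times X$ and shows $w$ is USC. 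Since $H$ is harmonic, $v-H$ is $\omega$-subharmonic on graphs on $B\times X$, hence so is $\max(V,v-H)$; the standard gluing lemma for subharmonic functions (applied after composition with an arbitrary holomorphic curve $g:O'\to X$) promotes $w$ to an $\omega$-subharmonic on graphs function on $D\times X$. Since $w\equiv V$ near $\partial D$, it satisfies the Perron boundary constraint, so $w\in G_\nu$, and therefore $w\le V$. Restricting to $B\times X$ and taking sup in $x$ yields $h\le H$ on $B$, as desired.

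The main obstacle is engineering the competitor $w$ so that all three required properties hold simultaneously: USC across $\partial B$, $\omega$-subharmonicity on graphs across $\partial B$, and unchanged boundary data on $\partial D$. Each ingredient reduces to the boundary inequality $h\le H$ on $\partial B$ together with classical subharmonic gluing applied curve-by-curve; once the competitor is in the Perron family, the envelope property $w\le V$ closes the argument.
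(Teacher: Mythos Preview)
Your proof is correct and follows essentially the same route as the paper. The only difference is organizational: for $\omega$-superharmonicity the paper factors the argument through two auxiliary lemmas—a reiteration lemma (showing that $V$ restricted to a ball $B$ is again the Perron envelope for the boundary data $V|_{\partial B\times X}$) and a maximum-principle lemma for that restricted problem—whereas you carry out the gluing/competitor construction in a single pass. Your competitor $w=\max(V,\,v-H)$ glued with $V$ is precisely the function used in the paper's reiteration lemma, and your inequality $v-H\le V$ on $\partial B\times X$ is exactly the step $(u-g)-M\in G_{V|_{\partial B}}$ in the paper's maximum-principle lemma, so the two arguments coincide at the level of ideas. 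One cosmetic point: your argument actually works for $H$ harmonic on $B$ and continuous on $\overline{B}$ (not just harmonic on a neighborhood of $\overline{B}$), which is the standard comparison criterion and avoids any approximation step.
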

Theorem \ref{sup is har} gives the existence part for the Dirichlet problem. Uniqueness follows readily from Theorem \ref{dist subhar}. Indeed, if $V_1$ and $V_2$ are $\omega$-harmonic on graphs in $D\times X$ and assume the boundary data $\nu$, then $V_1=V_2$ in $D\times X$ by Theorem \ref{dist subhar} and the maximum principle that ensues. 

We mainly focus on the trivial fibration $D\times X$ in this paper. For the developments of nontrivial fibration, see for example \cite{CampanaCaoMihai, finski2024lower, finski2024harder, wu2025mean}.

\subsection{Harmonic maps}\label{subsection har}
In this subsection, we will apply the results developed above and in \cite{Wu23} to study harmonic maps into the space of K\"ahler potentials $\mathcal{H}_\omega$ or more generally into $\text{PSH}(X,\omega)$.

We assume that $L$ is a positive line bundle over $X$, and $h$ is a positively curved metric on $L$ whose curvature equals $\omega$ (so $\omega$ is in an integral class). For a positive integer $k$, we denote by $\mathcal H_k$ the space of inner products on $H^0(X,L^k)$.

We will show the existence of a weak solution to the Dirichlet problem for harmonic maps into the space of K\"ahler potentials $\mathcal{H}_\omega$; moreover, such a solution can be approximated by harmonic maps into $\mathcal H_k$. (Rubinstein and Zelditch in \cite[Theorem 1.1]{RubinsteinZelditch} solved the Dirichlet problem and showed $C^2$ approximation, but their results are for the toric case.)

Let $D'$ be a regular bounded open set in $\mathbb{R}^m$. We will denote the coordinates in $D'$ by $t=(t_1,\dots, t_m)\in  \mathbb{R}^m$. Let $\nu$ be a real-valued continuous function on $\partial D'\times X$ such that for fixed $t\in \partial D'$ the function $ \nu(t,\cdot)$ on $X$ is in $ \text{PSH}(X,\omega)$.

Let us first consider the approximation part and the harmonic maps into $\mathcal H_k$. To that end, we will use the Hilbert map $H_k:\text{PSH}(X,\omega) \to \mathcal H_k$ and the Fubini--Study map $FS_k: \mathcal H_k\to \mathcal H_{\omega}$ (which will be recalled in Section \ref{section harmonic}). For the boundary data $H_k(\nu):\partial D'\to \mathcal H_k$, the following Dirichlet problem for the harmonic map equation 
\begin{equation}
\begin{dcases}\label{harmonic matrix}
 \sum_{j=1}^m \frac{\partial}{\partial t_j}\big((V^k)^{-1}  \frac{\partial V^k}{\partial t_j}\big)=0 \\
V^k|_{\partial D'}=H_k(\nu)
\end{dcases}    
\end{equation}
has a unique solution $V^k:\overline{D'}\to \mathcal H_k $ which is continuous on $\overline{D'}$ and smooth on $D'$ by \cite{Hamilton}.
The metric on $D'$ is Euclidean, and the metric on $\mathcal{H}_k$ is $(A,B)=\text{Tr}(h^{-1}Ah^{-1}B)$ for $h\in\mathcal{H}_k$ and $A,B\in T_h \mathcal H_k$ (see \cite[Section 1, Chapter VI]{Kobabook}).

\begin{theorem}\label{thm weak harmonic} Let $\nu $ be a continuous map from $\partial D'\times X$ to $\mathbb{R}$ such that $\nu(t, \cdot)\in \text{PSH}(X,\omega)$ for $t\in \partial D'$.
There exists a continuous function $V$ on $\overline{D'}\times X$ such that for fixed $t\in \overline{D'}$ the function $V(t,\cdot)$ is in $ \text{PSH}(X,\omega)$, $V|_{\partial D'}=\nu$, and $V$ can be approximated/quantized by harmonic maps $V^k:\overline{D'}\to \mathcal H_k$ from (\ref{harmonic matrix}) in the following sense. The functions $FS_k(V^k)$ converge to $V$ uniformly on $D'\times X$ as $k\to \infty$. Moreover, if $V$ is $C^2$, then it solves the harmonic map equation
\begin{equation}\label{1.5}
\sum^m_{j=1}|\nabla V_{t_j}|_{V}^2-2V_{t_jt_j}=0.
\end{equation}
\end{theorem}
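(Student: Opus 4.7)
I reformulate the Mabuchi-harmonic map problem as an $s$-independent instance of the WZW Dirichlet problem via complexification, and construct $V$ as the uniform limit of the quantizations $FS_k(V^k)$. Throughout, Hamilton's theorem supplies the unique $V^k:\overline{D'}\to\mathcal H_k$ of (\ref{harmonic matrix}).

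\emph{Complexification.} Fix a bounded regular open $U\subset\mathbb{R}^m$ and set $\tilde D := D' + iU \subset \mathbb{C}^m$. Extend $V^k$ to $\tilde V^k(t+is) := V^k(t)$ on $\tilde D$. Using $\partial_{z_j}=\tfrac12(\partial_{t_j}-i\partial_{s_j})$, the HYM equation $\sum_j\partial_{\bar z_j}\bigl((\tilde V^k)^{-1}\partial_{z_j}\tilde V^k\bigr)=0$ on $\tilde D$ reduces exactly to (\ref{harmonic matrix}), so $\tilde V^k$ is HYM. By \cite[Lemma 4.1]{Wu23} (or the discussion around Lemma \ref{+ det}), $FS_k(\tilde V^k)$ is $s$-invariant and $\omega$-subharmonic on graphs on $\tilde D\times X$.

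\emph{Uniform limit.} The NPC maximum principle for harmonic maps into $\mathcal{H}_k$, combined with Hamilton's interior estimates applied to (\ref{harmonic matrix}), gives Arzelà--Ascoli precompactness of the family $\{FS_k(V^k)\}$ on compact subsets of $D'\times X$. Extract a subsequence $FS_{k_j}(V^{k_j})\to V$ uniformly on compacta; then $V(t,\cdot)\in\text{PSH}(X,\omega)$ as a uniform limit of $\omega$-psh functions. Continuity up to $\overline{D'}\times X$ and the identity $V|_{\partial D'}=\nu$ follow from the regularity of $\partial D'$ via the barrier construction of Lemma \ref{barrier}, combined with the Tian--Catlin--Zelditch asymptotic $FS_k\circ H_k\to\mathrm{id}$ uniformly on compact subsets of continuous $\omega$-psh functions (\cite[Lemma 5.10]{DarvasWu}), applied to $\nu(\zeta,\cdot)$ for $\zeta\in\partial D'$.

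\emph{$\omega$-harmonicity on graphs, full convergence, and PDE.} Let $\tilde V(t+is,x):=V(t,x)$ on $\overline{\tilde D}\times X$, continuous with $\tilde V(z,\cdot)\in\text{PSH}(X,\omega)$ for every $z$. Let $W$ be the Perron sup from Theorem \ref{sup is har} on $\tilde D\times X$ with boundary data $\tilde V|_{\partial\tilde D}$. Since $\tilde V$ is itself $\omega$-subharmonic on graphs (a uniform limit of $FS_{k_j}(\tilde V^{k_j})$) and matches that data, $\tilde V\le W$. For the reverse, invoke \cite[Theorem 1.2]{Wu23}: $W=\lim FS_k(W^k)$ with $W^k:\tilde D\to\mathcal H_k$ HYM having boundary data $H_k(\tilde V|_{\partial\tilde D})$. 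On $\partial D'\times\overline U$ this data coincides with that of $\tilde V^k$ (both equal $H_k(\nu)$); on $\overline{D'}\times\partial U$ it differs only by $V^k(t)$ vs.\ $H_k(V(t,\cdot))$, which after $FS_k$ is $o(1)$ by Tian--Catlin--Zelditch. The classical HYM stability (subharmonicity of the $\mathcal H_k$ Riemannian distance between two HYM solutions, \cite{CoifmanSemmes}) then gives $FS_k(W^k)-FS_k(\tilde V^k)\to 0$ uniformly on $\tilde D\times X$, hence $W=\tilde V$, and $\tilde V$ is $\omega$-harmonic on graphs. Theorem \ref{dist subhar} now pins down $\tilde V$ uniquely, promoting the subsequential convergence to full convergence. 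If $V$ is $C^2$, so is $\tilde V$ and $\omega+i\partial\bar\partial\tilde V(z,\cdot)>0$, so Theorem \ref{characterize} implies $\tilde V$ solves (\ref{WZW}); substituting $\tilde V_{z_j}=\tilde V_{\bar z_j}=\tfrac12 V_{t_j}$ and $\tilde V_{z_j\bar z_j}=\tfrac14 V_{t_jt_j}$ makes the Poisson bracket vanish and collapses (\ref{WZW}) to (\ref{1.5}).

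\emph{Main obstacle.} The crux is the identification $\tilde V=W$: the ``new'' face $\overline{D'}\times\partial U$ of $\partial\tilde D$ carries boundary data that is itself the limit one is trying to construct. Circumventing this requires bootstrapping through the HYM approximations on $\tilde D$, and extracting a quantitative rate in the Tian--Catlin--Zelditch asymptotic sufficient to control $d_{\mathcal H_k}(V^k(t),H_k(V(t,\cdot)))$ uniformly in $t\in\overline{D'}$; this is what ultimately drives both the Cauchy property of $\{FS_k(V^k)\}$ and the passage from subsequential to full convergence.
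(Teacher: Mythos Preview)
Your complexification $\tilde D = D' + iU$ with $U\subset\mathbb{R}^m$ bounded is the source of the difficulty you correctly flag as the ``main obstacle,'' and it is not resolved. The boundary $\partial\tilde D$ acquires the face $\overline{D'}\times\partial U$, on which the relevant data is $V(t,\cdot)$ itself; the identification $\tilde V = W$ is therefore circular. To control $d_{\mathcal H_k}\bigl(V^k(t), H_k(V(t,\cdot))\bigr)$ on that face you would already need $FS_k(V^k)\to V$ at interior points $t\in D'$, which is exactly the quantization statement being proved. The proposed bootstrap does not break the loop, since nothing in the argument produces an a priori comparison between the harmonic extension $V^k$ and $H_k\circ V$ at interior $t$ independently of the limit. (The Arzel\`a--Ascoli step is also suspect: Hamilton's estimates are for a fixed target, and the targets $\mathcal H_k$ have dimension going to infinity.)

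The paper eliminates the extra face by a different complexification: $\zeta_j = e^{t_j+is_j}$, so that $D=\{\zeta\in(\mathbb C^*)^m:(\log|\zeta_1|,\dots,\log|\zeta_m|)\in D'\}$ is a Reinhardt-type domain whose boundary sits entirely over $\partial D'$. The boundary data $\tilde\nu(\zeta)=\nu(\log|\zeta_1|,\dots,\log|\zeta_m|)$ is then determined by $\nu$ alone. The order of construction is also reversed: first build $\tilde V$ as the Perron envelope $\sup G_{\tilde\nu}$, which is continuous and $\omega$-harmonic on graphs by Theorem~\ref{sup is har} and Lemma~\ref{barrier}; then prove rotational invariance $\tilde V(\lambda\zeta,x)=\tilde V(\zeta,x)$ for $|\lambda_j|=1$ via Lemma~\ref{lemma T} and the maximum principle from Theorem~\ref{dist subhar}, so $\tilde V$ descends to $V$ on $D'$. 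Quantization now follows directly from \cite[Theorem~1.2]{Wu23}: the HYM solution $\tilde V^k$ on $D$ (with the metric $\sum_j|\zeta_j|^{-2}\,d\zeta_j\wedge d\bar\zeta_j$) with boundary data $H_k(\tilde\nu)$ satisfies $FS_k(\tilde V^k)\to\tilde V$ uniformly; HYM uniqueness makes $\tilde V^k$ rotationally invariant, and the chain rule $\partial_{\zeta_j}=(2\zeta_j)^{-1}\partial_{t_j}$ converts the HYM equation into (\ref{harmonic matrix}) for the descended $V^k$. The final $C^2$ step goes as you indicated, through the same chain rule and the vanishing Poisson bracket $\{V_{t_j},V_{t_j}\}_V=0$.
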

We will view $V$ as a weak solution of the harmonic map equations. The idea of the proof is based on the fact that the harmonic map equation can be complexified to the WZW equation just like the geodesic equation can be complexified to the Monge--Amp\`ere equation. To be precise, we complexify $t_j\in \mathbb{R}$ by considering $e^{t_j+\sqrt{-1}s_j}=\zeta_j\in \mathbb{C}$ and extend the boundary data $\nu$ to a rotationally invariant boundary data $\tilde{\nu}$. The sup of the Perron family $G_{\tilde{\nu}}$ will be $\omega$-harmonic on graphs (by Theorem \ref{sup is har}) and rotationally invariant (by Theorem \ref{dist subhar} and the maximum principle), and hence it defines a function in $D'\times X$ which is the $V$ in Theorem \ref{thm weak harmonic} (see Section \ref{section harmonic} for details). 

Let us compare Theorem \ref{thm weak harmonic} with previous results and indicate our new input. The geodesic approximation in $\mathcal{H}_\omega$ is first proved by Phong and Sturm in \cite{PhongSturm} and refined by Berndtsson in \cite{berndtsson2013probability} with $C^0$ approximation. The harmonic approximation is proved by Rubinstein and Zelditch in \cite{RubinsteinZelditch} with $C^2$ approximation but for the toric case. In \cite{Wu23}, we prove the approximation of the WZW equation by the HYM equations which covers the results in \cite{PhongSturm,berndtsson2013probability, RubinsteinZelditch} as special cases. However, it was not known how to prove the most general harmonic approximation (without the toric assumption) using the results in \cite{Wu23}. It is the idea of $\omega$-harmonicity on graphs that enables us to prove the most general harmonic approximation (Theorem \ref{thm weak harmonic}), and this is the new input in this paper.

When $m=1$ and $D'$ is the open interval $(0,1)$, $V$ of Theorem \ref{thm weak harmonic} defines a geodesic in $\text{PSH}(X,\omega)$. Actually, such $V$ behaves also like a geodesic with respect to the distance $d$ in (\ref{dist}) in the sense that  
\begin{equation}\label{proportional}
 d(V(s),V(t))=|s-t|d(V(0),V(1))   
\end{equation}
(the proof is given in Section \ref{sec appendix}. Such a formula is already mentioned in \cite{RevSemmes,CoifmanSemmes} for interpolation of norms. That the same curve behaves like a geodesic with respect to different metrics is discussed in \cite{Tamasadv,DLR,LLleastaction}). However, the curve $t\mapsto tV(1)+(1-t)V(0)$ also satisfies (\ref{proportional}) as one can check easily. For higher dimension $m$, the harmonic maps can probably be used to find flats in $\mathcal H_\omega$ (\cite{remi2023infinite, darvas2025lines}).

Besides the harmonic maps or the HYM equations above, one may also think about twisted harmonic maps or the Hitchin equation on the bundle $D\times H^0(X, L^k)\to D$ (\cite{Hitchin,Donaldsontwisted,Corlette,Simpson}). But we do not know what the 'limiting' equation in $D\to \mathcal{H}_\omega$ should look like. We hope to investigate the Hitchin equation in the space of K\"ahler potentials and its quantization in a future paper.

We organize the paper as follows. We prove four lemmas in Section \ref{sec lemma} and prove the characterization of the WZW equation, Theorem \ref{characterize}, in Section \ref{section charac}. Remarks about Legendre duality and $\omega$-harmonicity are also given in Section \ref{section charac}. The fact that the sup of the Perron family is $\omega$-harmonic is proved in Section \ref{section sup is har}. The harmonic maps into $\mathcal H_\omega$ and their quantization are discussed in Section \ref{section harmonic}. In Section \ref{sec appendix}, we show that the geodesics in $\mathcal H_\omega$ with respect to the Mabuchi metric also behave like geodesics with respect to the distance $d$ in (\ref{dist}).

I would like to thank R\'emi Reboulet for discussions during the preparation of the paper. I am grateful to L\'aszl\'o Lempert for his remarks on the draft of the paper, especially the suggestion on Lemma \ref{lemma on T}. Thanks are also due to Tam\'as Darvas for his interest in the paper.

\section{Lemmas}\label{sec lemma}

We begin with a lemma that guarantees that the function $h$ in Definition \ref{super} is always upper semicontinuous.

\begin{lemma}\label{upper lemma}
    Let $u$ be an upper semicontinuous function on $D\times X$. The function $h(z):=\sup_{x\in X}u(z,x)$ is upper semicontinuous in $D$.
\end{lemma}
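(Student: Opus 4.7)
My plan is to exploit the compactness of $X$ (a compact complex manifold, by the standing setup) together with the standard fact that the supremum of an upper semicontinuous function on a compact space is attained. Upper semicontinuity of $h$ at a point $z_0 \in D$ then reduces to a routine sequential / subsequential argument combined with the upper semicontinuity of $u$ itself.

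Concretely, I would take an arbitrary sequence $z_n \to z_0$ in $D$ and show $\limsup_n h(z_n) \leq h(z_0)$. After passing to a subsequence, I may assume $h(z_n)$ converges to $\limsup_n h(z_n)$. For each fixed $n$, the slice function $u(z_n, \cdot)$ is upper semicontinuous on the compact space $X$, so the sup $h(z_n) = \sup_{x \in X} u(z_n, x)$ is attained at some point $x_n \in X$. By compactness of $X$, I extract a further subsequence with $x_n \to x_0 \in X$. Then $(z_n, x_n) \to (z_0, x_0)$ in $D \times X$, and the upper semicontinuity of $u$ gives
\[
\limsup_n h(z_n) = \limsup_n u(z_n, x_n) \leq u(z_0, x_0) \leq h(z_0),
\]
as desired.

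An equivalent (and slightly more elegant) option is to show directly that the sublevel set $\{z : h(z) < c\}$ is open for every $c \in \mathbb{R}$. Given $z_0$ with $h(z_0) < c$, pick any $c' \in (h(z_0), c)$; then $u(z_0, x) < c'$ for every $x \in X$, so by upper semicontinuity of $u$ each $(z_0, x)$ admits a product neighborhood $U_x \times V_x$ on which $u < c'$. Compactness of $X$ yields a finite subcover $V_{x_1}, \dots, V_{x_N}$, and on the intersection $U := \bigcap_{i=1}^N U_{x_i}$ one has $u(z, x) < c'$ for all $x \in X$, whence $h(z) \leq c' < c$ on $U$.

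There is really no genuine obstacle here: the only point that requires a moment's care is ensuring that the sup is realized (so that we have honest points $x_n$ to extract a convergent subsequence from), which is guaranteed by the compactness of $X$ and the upper semicontinuity of the slices $u(z_n, \cdot)$. I would choose the sequential version for the write-up, since it is marginally shorter and mirrors the way $h$ is used in Definition~\ref{super}.
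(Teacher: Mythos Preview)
Your proposal is correct and your sequential argument is essentially the same as the paper's main proof (the paper phrases it as a contradiction, but the core steps---attain the sup at $x_n$, extract a convergent subsequence by compactness of $X$, then invoke upper semicontinuity of $u$---are identical). The paper also records a one-line alternative you might enjoy: since the projection $\pi_1:D\times X\to D$ is proper, $\{h\geq c\}=\pi_1\{u\geq c\}$ is closed, which is slightly slicker than your finite-cover argument for the sublevel sets.
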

\begin{proof}
    Suppose that $\limsup_{z_0} h(z)> h(z_0)$ for some $z_0$ in $D$. We can find a positive constant $c$ such that $\limsup_{z_0} h(z)> h(z_0)+c$. For any positive integer $n$, there exists $z_n\in B(z_0, 1/n)$ such that 
    \begin{equation}
     h(z_n)> h(z_0)+c.    
    \end{equation}
     Meanwhile, $h(z_n)=\sup_{x\in X}u(z_n,x)=u(z_n,x_n)$ for some $x_n \in X$ because $u$ is upper semicontinuous and $X$ is compact. Using the compactness of $X$, there exists $\tilde{x}\in X$ such that some subsequence of $x_n$ converges to $\tilde{x}$ (we still denote the subsequence by $x_n$). By the upper semicontinuity of $u$, we have $\limsup_{(z_0,\tilde{x})} u\leq u(z_0,\tilde{x})\leq h(z_0)$. Therefore, there exists some $r>0$ such that $h(z_0)+c/2> \sup_{B((z_0,\tilde{x}),r )}u$. But for $n$ large, $$\sup_{B((z_0,\tilde{x}),r )}u\geq u(z_n,x_n)=h(z_n)>h(z_0)+c.$$ 
We get $h(z_0)+c/2>h(z_0)+c$, a contradiction. 

Alternatively, let $\pi_1:D\times X \to D$ be the projection. If $c\in \mathbb{R}$ then $\{z\in D: h(z)\geq c\}=\pi_1\{(z,x):u(z,x)\geq c\}$ is closed because $\{(z,x):u(z,x)\geq c\}$ is closed and $\pi_1$ is proper.
\end{proof}

\begin{lemma}\label{lemma on T}
Let $T$ be a biholomorphic map from $\tilde{D}$ to $D$ where $\tilde{D}$ and $D$ are open sets in $\mathbb{C}^m$. The following are equivalent 
\begin{enumerate}
    \item \label{statement 1} For any subharmonic function $h$ in an open subset of $D$, the composition $h\circ T$ is subharmonic.
    \item \label{statement 2}
    For any $z_0\in \tilde{D}$, if we denote by $A$ the Jacobian matrix $T'(z_0)$, then $\text{Tr}A^*MA\geq 0$ for any $m\times m$ Hermitian matrix $M$ with $\text{Tr}M\geq 0$.
    
    \item \label{statement 3} The Jacobian matrix $T'$ satisfies $T'(T')^{*}=a I_m$ for some positive function $a$ on $\tilde{D}$ where $I_m$ is the $m$-by-$m$ identity matrix and $(T')^{*}$ is the conjugate transpose of $T'$.
     
\end{enumerate}    
\end{lemma}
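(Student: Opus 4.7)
The plan is to establish $(\ref{statement 2})\Leftrightarrow(\ref{statement 3})$ by elementary linear algebra on the trace pairing of Hermitian matrices, and $(\ref{statement 1})\Leftrightarrow(\ref{statement 2})$ by the complex chain rule together with an explicit quadratic test function. Throughout, fix $z_0\in\tilde{D}$ and set $w_0:=T(z_0)$, $A:=T'(z_0)$.

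For $(\ref{statement 2})\Leftrightarrow(\ref{statement 3})$, write $B:=AA^*$; $B$ is Hermitian and, since $T$ biholomorphic makes $A$ invertible, positive definite. The cyclic identity $\text{Tr}(A^*MA)=\text{Tr}(BM)$ rephrases $(\ref{statement 2})$ as: $\text{Tr}(BM)\geq 0$ for every Hermitian $M$ with $\text{Tr}(M)\geq 0$. Testing against $\pm M$ on the hyperplane $\text{Tr}(M)=0$ forces $\text{Tr}(BM)=0$ there, so $B$ is orthogonal, with respect to the real inner product $(P,Q)\mapsto\text{Tr}(PQ)$ on Hermitian matrices, to the trace-zero hyperplane; its orthogonal complement is $\mathbb{R}\cdot I_m$, so $B=aI_m$ for some $a\in\mathbb{R}$, and positive-definiteness gives $a>0$. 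Conversely, $AA^*=aI_m$ immediately yields $\text{Tr}(A^*MA)=a\,\text{Tr}(M)\geq 0$ whenever $\text{Tr}(M)\geq 0$.

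For $(\ref{statement 1})\Leftrightarrow(\ref{statement 2})$, holomorphicity of $T$ collapses the chain rule for $h\in C^2(D)$ to
$$\partial_{z_j}\partial_{\bar z_k}(h\circ T)(z_0)=\sum_{\alpha,\beta}h_{w_\alpha\bar w_\beta}(w_0)\,A_{\alpha j}\,\overline{A_{\beta k}}.$$
Summing $j=k$ and rearranging via the Hermiticity of $AA^*$ yields the identity
$$\tfrac14\Delta(h\circ T)(z_0)=\text{Tr}(A^*MA),$$
where $M$ is the transpose of the complex Hessian of $h$ at $w_0$; note $M$ is Hermitian with $\text{Tr}(M)=\tfrac14\Delta h(w_0)$. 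Hence $(\ref{statement 2})$ gives $(\ref{statement 1})$ for $C^2$ subharmonic $h$, and in general by standard convolution-smoothing of $h$. Conversely, given any Hermitian $M$ with $\text{Tr}(M)\geq 0$, the real quadratic
$$h(w)=\sum_{\alpha,\beta}M_{\alpha\beta}\,\overline{(w_\alpha-w_{0,\alpha})}\,(w_\beta-w_{0,\beta})$$
has constant Laplacian $4\,\text{Tr}(M)\geq 0$ (so is subharmonic on $D$) and complex Hessian $M^T$ at $w_0$; the chain-rule identity above then specializes to $\tfrac14\Delta(h\circ T)(z_0)=\text{Tr}(A^*MA)$, so applying $(\ref{statement 1})$ to this $h$ at $z_0$ delivers $\text{Tr}(A^*MA)\geq 0$.

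I do not foresee any serious obstacle. The only points requiring mild care are tracking the transpose in the chain-rule identity (a direct consequence of rewriting the Hermitian bilinear sum $\sum h_{w_\alpha\bar w_\beta}(AA^*)_{\alpha\beta}$ as $\text{Tr}(A^*MA)$) and the standard reduction from a general subharmonic $h$ to a smooth one by convolution with a mollifier.
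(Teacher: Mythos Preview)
Your proof is correct and follows essentially the same approach as the paper. The paper dismisses $(\ref{statement 1})\Leftrightarrow(\ref{statement 2})$ as ``straightforward computation'' whereas you spell out the chain rule and the quadratic test function; for $(\ref{statement 2})\Rightarrow(\ref{statement 3})$ both arguments begin with the observation that $\text{Tr}M=0$ forces $\text{Tr}(AA^*M)=0$, but the paper then passes through a unitary diagonalization of $AA^*$ while your orthogonal-complement argument reaches $AA^*\in\mathbb{R}\,I_m$ in one step, which is slightly cleaner.
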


\begin{proof}
The equivalence between Statement \ref{statement 1} and Statement  \ref{statement 2} can be verified by straightforward computation. We now assume Statement \ref{statement 2} and try to prove Statement \ref{statement 3}. Replacing $M$ by $-M$, we see that $\text{Tr}A^*MA \leq  0$ if $M$ is Hermitian and $\text{Tr}M\leq 0$. So, the kernel of the linear functional $M\mapsto \text{Tr}A^*MA$ contains the kernel of the linear functional $M\mapsto \text{Tr}M$. This implies that the former functional is a constant multiple of the latter, and the constant is $\frac{1}{m}\text{Tr}A^*A$ which can be computed by setting $M=I_m$. Therefore, 
\begin{equation}\label{555}
\text{Tr}A^*MA=\frac{1}{m}(\text{Tr}A^*A) (\text{Tr}M), \text{ for any Hermitian $M$}.    
\end{equation}
Let $U$ be a unitary matrix such that $UAA^*U^*=\Delta$ where $\Delta$ is a diagonal matrix with diagonals $\{d_j\}_{j=1\sim m}$. Since $\text{Tr}A^*MA= \text{Tr}UAA^*U^*UMU^*=\text{Tr}\Delta UMU^*$ and $(\text{Tr}A^*A) (\text{Tr}M)=(\text{Tr}UAA^*U^*)( \text{Tr}UMU^*)=(\text{Tr}\Delta)( \text{Tr}UMU^*)$, formula (\ref{555}) becomes 
\begin{equation}
    \text{Tr}\Delta UMU^*=\frac{1}{m}(\text{Tr}\Delta)( \text{Tr}UMU^*), \text{ for any Hermitian $M$}.
\end{equation}
By choosing $UMU^*$ to be the diagonal matrix with the $k$-th diagonal equal to one and zero for other diagonals, we get $d_k=\sum_j d_j/m$. So, the $d_k$ are equal among themselves, say equal to $d$. Hence $AA^*=U^*\Delta U=d I_m$. After varying $z_0$, we get Statement \ref{statement 3}. The implication from Statement \ref{statement 3} to Statement \ref{statement 2} can be verified easily and we skip the details.    
\end{proof}

If $T$ satisfies the properties in Lemma \ref{lemma on T}, then so does $T^{-1}$ by using Statement \ref{statement 3}. When $m=1$, any biholomorphic $T$ satisfies the properties in Lemma \ref{lemma on T}. Another example that we will use later is given by $T(z_1,\ldots,z_m)=(\lambda_1z_1,\ldots, \lambda_m z_m)$
 with $|\lambda_j|=1$.

\begin{lemma}\label{lemma T}
Let $T$ be a biholomorphic map from $\tilde{D}$ to $D$ where $\tilde{D}$ and $D$ are open sets in $\mathbb{C}^m$. Assume $T$ satisfies the properties in Lemma \ref{lemma on T}. If $u$ is $\omega$-subharmonic on graphs in $D\times X$, then $u(T(z),x)$ is $\omega$-subharmonic on graphs in $\tilde{D}\times X$.

Moreover, if $u$ is $\omega$-harmonic on graphs in $D\times X$, then $u(T(z),x)$ is $\omega$-harmonic on graphs in $\tilde{D}\times X$.     
\end{lemma}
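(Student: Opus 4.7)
\bigskip

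\noindent\textbf{Proof proposal for Lemma \ref{lemma T}.}

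The plan is to reduce both assertions to the definitions by transporting test data through $T$, using Lemma \ref{lemma on T} to keep subharmonicity on the base. For the subharmonic statement, let $\tilde{u}(z,x):=u(T(z),x)$; note $\tilde{u}$ is upper semicontinuous since $T$ is continuous. Take any holomorphic map $\tilde{f}:U\to X$ from an open subset $U\subset \tilde{D}$, and set $f:=\tilde{f}\circ T^{-1}:T(U)\to X$, which is holomorphic because $T^{-1}$ is biholomorphic. Given a point and a local K\"ahler potential $\psi$ of $\omega$ near $\tilde{f}(z_0)=f(T(z_0))$, the hypothesis gives that $w\mapsto \psi(f(w))+u(w,f(w))$ is subharmonic on a neighborhood in $T(U)$. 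Precomposing with $T$ and invoking Statement \ref{statement 1} of Lemma \ref{lemma on T}, the function
\[
z\mapsto \psi(f(T(z)))+u(T(z),f(T(z)))=\psi(\tilde{f}(z))+\tilde{u}(z,\tilde{f}(z))
\]
is subharmonic on the corresponding neighborhood in $U$. Since $\tilde{f}$ was arbitrary, $\tilde{u}$ is $\omega$-subharmonic on graphs.

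For the superharmonic statement (which then gives the moreover clause), assume $u$ is $\omega$-superharmonic on graphs in $D\times X$ and let $\tilde{v}$ be $\omega$-subharmonic on graphs in $U\times X$ with $U\subset\tilde{D}$ open. Put $v(w,x):=\tilde{v}(T^{-1}(w),x)$ on $T(U)\times X$. Since $T^{-1}$ also satisfies the properties of Lemma \ref{lemma on T} (as noted in the paragraph following its proof), the already-established subharmonic part of Lemma \ref{lemma T}, applied to $T^{-1}$ in place of $T$, shows that $v$ is $\omega$-subharmonic on graphs in $T(U)\times X$. By $\omega$-superharmonicity of $u$, the envelope
\[
h(w):=\sup_{x\in X}\bigl(v(w,x)-u(w,x)\bigr)
\]
is subharmonic in $T(U)$. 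Composing with $T$ and applying Statement \ref{statement 1} of Lemma \ref{lemma on T} once more yields that
\[
z\mapsto h(T(z))=\sup_{x\in X}\bigl(\tilde{v}(z,x)-\tilde{u}(z,x)\bigr)
\]
is subharmonic in $U$, which is exactly the defining property of $\omega$-superharmonicity on graphs for $\tilde{u}$. Combined with the first half, this proves the moreover clause.

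The only place that could trip one up is the second half, where one must transport the test function $\tilde{v}$ in the opposite direction via $T^{-1}$; everything works because Lemma \ref{lemma on T} is symmetric in $T$ and $T^{-1}$. The semicontinuity and measurability of the envelopes cause no issues since $T$ is a homeomorphism and the needed upper semicontinuity of $h$ on $D$ is supplied by Lemma \ref{upper lemma}. No further delicate estimate is needed beyond this bookkeeping.
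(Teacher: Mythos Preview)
Your proof is correct and follows essentially the same route as the paper: transport the test data through $T$ (or $T^{-1}$) and invoke Statement~\ref{statement 1} of Lemma~\ref{lemma on T} to pull subharmonicity back. The paper's argument is written slightly more tersely, but the substance---including the use of $T^{-1}$ in the superharmonic half---is identical.
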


\begin{proof}
 Let $f$ be a holomorphic map from an open subset of $\tilde{D}$ to $X$. We want to show that $u(T(z),f(z))+\psi(f(z))$ is subharmonic where $\psi$ is a local potential of $\omega$. In fact, the function
\begin{equation}
u(T(z),f(z))+\psi(f(z))=u(\zeta,f(T^{-1}(\zeta)))+\psi(f(T^{-1}(\zeta))) 
\end{equation}
is subharmonic in $\zeta$ by $\omega$-subharmonicity of $u$ and it is also subharmonic in $z$ by the assumption on $T$ (we use Statement \ref{statement 1} in Lemma \ref{lemma on T}).

For the second part, we only need to show that $u(T(z),x)$ is $\omega$-superharmonic on graphs in $\tilde{D}\times X$. Let $v$ be $\omega$-subharmonic on graphs in $U\times X$ with $U$ an open subset of $\tilde{D}$. We want to show that $h(z)=\sup_{x\in X}v(z,x)-u(T(z),x)$ is subharmonic in $U$. In fact,
\begin{equation}
    h(z)=\sup_{x\in X}v(z,x)-u(T(z),x)=\sup_{x\in X}v(T^{-1}(\zeta),x)-u(\zeta,x)
\end{equation}
is subharmonic in $\zeta$ since $v(T^{-1}(\zeta),x)$ is $\omega$-subharmonic on graphs in $T(U)\times X$ by the first part and $u$ is $\omega$-superharmonic on graphs. By the assumption on $T$, the function $h(z)$ is subharmonic in $z$. 
\end{proof}

In \cite{Wu23}, the bounded open set $D\subset \mathbb{C}^m $ is assumed to be smooth strongly pseudoconvex (that is, a smooth strongly plurisubharmonic defining function for $D$). This assumption is needed only in \cite[Lemma 3.2]{Wu23} for the construction of barriers. In the next lemma, we show that the assumption can be relaxed to regular domains ($D$ being regular means that continuous boundary data always have harmonic extension to $D$).
\begin{lemma}\label{barrier}
    Let $D$ be a regular bounded open set in $\mathbb{C}^m$. Let $\nu$ be a continuous map from $\partial D\times X $ to $ \mathbb{R}$ such that $\nu(z,\cdot)\in \text{PSH}(X,\omega)$ for $z\in \partial D$, and denote by $V$ the sup of the Perron family 
\begin{align*}
G_\nu=\{u \in\textup{usc($D\times X$)}:& \textup{ $u$ is $\omega$-subharmonic on graphs}, \textup{and }\limsup_{D \ni z \to \zeta\in \partial D}u(z,x)  \leq \nu(\zeta,x)\}.
\end{align*}
Then $$\lim_{(z,x)\to(z_0,x_0)\in \partial D\times X} V(z,x)=\nu(z_0,x_0).$$
Moreover, $V$ is continuous on $D\times X$ and $V\in G_\nu$.

\end{lemma}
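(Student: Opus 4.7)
The proof follows the Perron method for the Dirichlet problem, the new input being barrier construction using only regularity of $D$. The strategy divides into three steps: (a) the upper semicontinuous regularization $V^*$ of $V$ is $\omega$-subharmonic on graphs; (b) at each boundary point, barriers force $V$ to attain $\nu$ continuously; (c) $V=V^*\in G_\nu$ and $V$ is continuous on $D\times X$. Step (a) is standard: for each holomorphic $f$ from an open subset of $D$ to $X$, $\psi(f(z))+V^*(z,f(z))$ is the upper semicontinuous regularization of a pointwise sup of subharmonic functions (each coming from some $u\in G_\nu$), and hence subharmonic by the classical regularization lemma.

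The core of the proof is Step (b). Fix $(z_0,x_0)\in \partial D\times X$ and $\varepsilon>0$. I construct a lower barrier $\underline{w}\in G_\nu$ with $\underline{w}(z_0,x_0)=\nu(z_0,x_0)$, and an upper barrier $w$ that is $\omega$-superharmonic on graphs with $w\geq \nu$ on $\partial D\times X$ and $w(z_0,x_0)\leq \nu(z_0,x_0)+\varepsilon$. For $\underline{w}$, take
\[\underline{w}(z,x)=\nu(z_0,x)-D(z),\]
where $D$ is the harmonic extension to $D$, continuous on $\overline{D}$ by regularity of $D$, of the continuous function $\zeta\mapsto \sup_{x\in X}(\nu(z_0,x)-\nu(\zeta,x))$ on $\partial D$. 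Since $\nu(z_0,\cdot)\in\text{PSH}(X,\omega)$, along any holomorphic graph $\psi(f(z))+\nu(z_0,f(z))-D(z)$ is a sum of a subharmonic and a harmonic function in $z$, hence subharmonic, so $\underline{w}$ is $\omega$-subharmonic on graphs; its boundary values are dominated by $\nu$ by construction, while $\underline{w}(z_0,x_0)=\nu(z_0,x_0)$. For $w$, take
\[w(z,x)=A(z)-B(x),\]
where $B$ is a continuous $\omega$-psh peak function on $X$ with $B(x_0)=0$ and $B\leq -C$ outside a small neighborhood of $x_0$, and $A$ is the harmonic extension to $D$ of $\zeta\mapsto \sup_x(\nu(\zeta,x)+B(x))$. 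That $w$ is $\omega$-superharmonic on graphs follows directly from Definition \ref{super}: for any $v\in G_\nu$, each slice $v(\cdot,x)$ is subharmonic (take $f\equiv x$ in Definition \ref{sub graphs}), so $\sup_x(v+B)$ is an upper semicontinuous (Lemma \ref{upper lemma}) sup of functions subharmonic in $z$ and hence subharmonic, and therefore $\sup_x(v-w)=\sup_x(v+B)-A$ is subharmonic in $D$. A maximum principle argument then gives $V\geq \underline{w}$ and $V\leq w$ on $D\times X$, and sharpening $B$ while letting $\varepsilon\to 0$ yields $\lim_{(z,x)\to(z_0,x_0)}V(z,x)=\nu(z_0,x_0)$.

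For Step (c), the boundary continuity just established yields $V\in G_\nu$ (the limsup condition at the boundary follows from the upper barrier), hence $V^*=V$. Continuity of $V$ on $D\times X$ then follows as in \cite{Wu23}: $V$ is upper semicontinuous by membership in $G_\nu$, and lower semicontinuity is obtained by comparing $V$ against interior shifts of the continuous barriers built in Step (b), invoking the maximum principle encoded in Definition \ref{super}. The main obstacle is the upper barrier: unlike \cite[Lemma 3.2]{Wu23}, which uses a smooth plurisubharmonic defining function on a strongly pseudoconvex $D$, here one must combine purely potential-theoretic scalar harmonic extensions (afforded by regularity of $D$) with continuous $\omega$-psh peak functions on $X$, and ensure the boundary function $\zeta\mapsto \sup_x(\nu(\zeta,x)+B(x))$ is continuous on $\partial D$ so that its harmonic extension is continuous on $\overline{D}$ and the peak at $(z_0,x_0)$ can be made as sharp as needed.
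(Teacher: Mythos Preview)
Your lower barrier $\underline{w}(z,x)=\nu(z_0,x)-D(z)$ is exactly the paper's, written there as $H(z)+\nu(z_0,x)$ with $H$ the harmonic extension of $P(\zeta)=\inf_x(\nu(\zeta,x)-\nu(z_0,x))=-D(\zeta)$. The upper barrier, however, has a gap as stated. For $A(z_0)=\sup_x(\nu(z_0,x)+B(x))$ to lie within $\varepsilon$ of $\nu(z_0,x_0)$ you need the drop $C$ to exceed $\sup_X\nu(z_0,\cdot)-\nu(z_0,x_0)-\varepsilon$, while the neighborhood $\{B>-C\}$ must stay small enough that continuity of $\nu(z_0,\cdot)$ controls the sup there. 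But an $\omega$-psh function cannot peak that sharply: with a local potential $\psi$ normalized so that $\psi(x_0)=0$ and $d\psi(x_0)=0$, the function $B+\psi$ is plurisubharmonic, and the sub-mean-value inequality on a sphere of radius $r$ lying in $\{B\leq -C\}$ gives $0=(B+\psi)(x_0)\leq -C+O(r^2)$, forcing $C=O(r^2)$. Depth and width are therefore tied, and when $\nu(z_0,\cdot)$ has large oscillation no admissible $B$ exists.

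In fact the $\omega$-psh hypothesis on $B$ is never used in your argument---the subharmonicity of $z\mapsto\sup_x(v(z,x)+B(x))$ only needs that $B(x)$ is constant in $z$---so dropping it repairs the construction. But the paper's upper barrier is simpler and avoids any auxiliary function on $X$: for each fixed $x$ let $H_x$ be the harmonic extension of $\zeta\mapsto\nu(\zeta,x)$ to $\overline D$; since every $u\in G_\nu$ has $u(\cdot,x)$ subharmonic with $\limsup\leq\nu(\cdot,x)$ at $\partial D$, one gets $V(z,x)\leq H_x(z)$ directly. A short maximum-principle argument shows $(z,x)\mapsto H_x(z)$ is jointly continuous on $\overline D\times X$, and the $\limsup$ bound at $(z_0,x_0)$ follows at once. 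Both the paper and you then defer the interior continuity of $V$ and the membership $V\in G_\nu$ to \cite[Corollary~3.3]{Wu23}.
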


\begin{proof}
For $x\in X$, denote by $H_x(z)$ the function that is harmonic in $D$ and continuous in $\overline{D}$ with $H_x(\zeta)=\nu(\zeta,x)$ for $\zeta\in \partial D$. By the maximum principle of harmonic functions and the continuity of $\nu$, one can show that given $\varepsilon>0$, there exists $\delta>0$ such that $|H_x(z)-H_{x'}(z)|<\varepsilon$ for $d_\omega(x,x')<\delta$ and any $z\in \overline{D}$. Using the continuity of $H_x(z)$ in $z$ and the compactness of $X$, one can show that given $\varepsilon>0$, there exists $\delta'>0$ such that $|H_x(z)-H_x(z')|<\varepsilon$ for $|z-z'|<\delta'$ and any $x\in X$. Therefore, $H_x(z)$ is continuous jointly in $(z,x)\in \overline{D}\times X$.

For $u\in G_\nu$, $z\mapsto u(z ,x)$ is subharmonic by \cite[Lemma 3.1]{Wu23}. So $u(z,x)\leq H_x(z)$ and $V(z,x)\leq H_x(z)$ for any $(z,x)\in D\times X$. We have $\limsup_{(z,x)\to(z_0,x_0)\in \partial D\times X} V(z,x)\leq \nu(z_0,x_0)$.

For the other direction, we fix $z_0\in \partial D$ and define $P(\zeta)=\inf_{x\in X}\nu(\zeta,x)-\nu(z_0, x)$ for $\zeta \in 
 \partial D$. By Lemma \ref{upper lemma}, the function $P(\zeta)$ is continuous. Let $H(z)$ be the harmonic extension of $P(\zeta)$ to $\overline{D}$. The function $H(z)+\nu(z_0,x)$ is $\omega$-subharmonic on graphs and satisfies $\limsup_{z\to \zeta\in \partial D}H(z)+\nu(z_0,x)\leq \nu(\zeta,x)$ for any $x\in X$. Therefore, $H(z)+\nu(z_0,x)\leq V(z,x)$ and $\nu(z_0,x_0)\leq \liminf_{(z,x)\to(z_0,x_0)\in \partial D\times X} V(z,x)$.

The moreover part is proved in the same way as in \cite[Corollary 3.3]{Wu23}.
    
\end{proof}

\section{Proof of Theorem \ref{characterize}}\label{section charac}

In this section, we show that solutions of the WZW equation can be characterized by $\omega$-harmonicity on graphs. We will use the following formula from \cite{Wu23}. Suppose $u$ is a $C^2$ function on $D\times X$ and $\psi$ is a local potential of $\omega$. Consider the complex Hessian of $u+\psi$ with respect to a fixed coordinate $z_j$ in $D$ and local coordinates $x$ in $X$ where $\psi$ is defined
\begin{equation}\label{matrix}
\left (
\begin{array}{cccc}
(u+\psi)_{z_j\bar{z}_j}
& (u+\psi)_{z_j\bar{x}_1} & \cdots &  (u+\psi)_{z_j\Bar{x}_n}\\
(u+\psi)_{x_1\bar{z}_j} & (u+\psi)_{x_1\bar{x}_1} & \cdots & (u+\psi)_{x_1\bar{x}_n}\\
 \vdots & \vdots & \ddots & \vdots \\
(u+\psi)_{x_n\bar{z}_j}& (u+\psi)_{x_n\bar{x}_1} &\cdots & (u+\psi)_{x_n\bar{x}_n} 
\end{array}
\right );
\end{equation}
we will denote this matrix by $(u+\psi)_j$. Then
\begin{equation}\label{invariant}
\begin{aligned}
  &(i\partial \bar{\partial}u+\pi^*\omega)^{n+1}\wedge(i\sum_{j=1}^m dz_j\wedge d\bar{z}_j)^{m-1}\\
  =&(n+1)! (m-1)!\sum^m_{j=1}\det (u+\psi)_j  \big(\bigwedge^m_{k=1} i dz_k\wedge d\Bar{z}_k\wedge \bigwedge^n_{l=1} i dx_l\wedge d\Bar{x}_l\big).  
\end{aligned}
\end{equation}

The following lemma is implicit in the proof of \cite[Lemma 4.1]{Wu23} already.
\begin{lemma}\label{min lem}
Assume that $u$ is a $C^2$ function on $D\times X$ and $\omega +i\partial\Bar{\partial}u(z,\cdot)>0$ on $X$ for all $z\in D$. Let $f$ be any holomorphic function from an open subset of $D$ to $X$. We have
\begin{equation}\label{17}
\Delta(\psi(f(z))+u(z,f(z)))\geq \sum_j\frac{\det(u+\psi)_j}{\det (\psi_{\mu\Bar{\lambda}}+u_{\mu\Bar{\lambda}})} 
\end{equation}
where $\psi$ is a local potential of $\omega$, and $u_{\mu \bar{\lambda}}=\partial^2 u/\partial x_\mu \partial \bar{x}_\lambda$ and  $\psi_{\mu \bar{\lambda}}=\partial^2 \psi/\partial x_\mu \partial \bar{x}_\lambda$ with $x_\mu, x_\lambda$ local coordinates in $X$.

Moreover, fixing $(z_0,x_0)\in D\times X$, we can find $f$ with $f(z_0)=x_0$ such that the equality holds at $(z_0,x_0)$,
\begin{equation}
\Delta(\psi(f(z))+u(z,f(z)))|_{z_0}= \sum_j\frac{\det(u+\psi)_j}{\det (\psi_{\mu\Bar{\lambda}}+u_{\mu\Bar{\lambda}})}\big|_{(z_0,x_0)}. 
\end{equation}

\end{lemma}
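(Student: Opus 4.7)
The approach is to compute the complex Laplacian $\Delta(\psi(f(z)) + u(z, f(z)))$ by the chain rule, recognize it as a Hermitian quadratic expression in the first-order jet of $f$, and minimize using the Schur complement of the block matrix $(u+\psi)_j$.

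In detail, I would work in local coordinates on $D$ and on a chart of $X$ where $\psi$ is defined, set $\phi := u + \psi$, and write $v^{(j)}_\lambda := \partial f_\lambda/\partial z_j$. Holomorphicity of $f$ kills every $\partial f_\lambda/\partial \bar z_j$, so the chain rule yields at $(z, f(z))$
\[\frac{\partial^2 \phi(z,f(z))}{\partial z_j \partial \bar z_j} = \phi_{z_j \bar z_j} + \sum_\mu \phi_{z_j \bar x_\mu}\overline{v^{(j)}_\mu} + \sum_\lambda \phi_{x_\lambda \bar z_j}v^{(j)}_\lambda + \sum_{\lambda,\mu} \phi_{x_\lambda \bar x_\mu} v^{(j)}_\lambda \overline{v^{(j)}_\mu}.\]
Under the hypothesis $\omega + i\partial\bar\partial u(z,\cdot) > 0$, the Hermitian matrix $A := (\phi_{x_\lambda \bar x_\mu})$ is positive definite; it is exactly the lower-right block of $(u+\psi)_j$. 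Completing the square in $v^{(j)}$, equivalently applying the Schur complement formula to the Hermitian block matrix $(u+\psi)_j$, the minimum of the right-hand side over $v^{(j)}\in\mathbb{C}^n$ is attained at $v^{(j)} = -A^{-1}\bar\xi_j$, with $\xi_j := (\phi_{x_\lambda \bar z_j})_\lambda$, and equals
\[\phi_{z_j \bar z_j} - \xi_j^T A^{-1}\bar \xi_j \;=\; \frac{\det(u+\psi)_j}{\det(\psi_{\mu\bar\lambda} + u_{\mu\bar\lambda})}.\]
Summing over $j$ then yields (\ref{17}).

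For the moreover part, the optimality conditions for different $j$ constrain derivatives of $f$ in distinct holomorphic directions and are therefore independent linear constraints on its $1$-jet at $z_0$. In local coordinates on $X$ centered at $x_0$, I would take $f$ to be the affine map $f_\lambda(z) = x_{0,\lambda} + \sum_j c_{j\lambda}(z_j - z_{0,j})$ with $c_{j\lambda}$ equal to the $\lambda$-th component of $-A^{-1}\bar \xi_j$ evaluated at $(z_0, x_0)$. Then $f(z_0)=x_0$ and each $j$-th term of $\partial^2\phi(z,f(z))/\partial z_j\partial\bar z_j$ attains its Schur-complement lower bound at $z_0$, giving equality in (\ref{17}) at that point. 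The proof is essentially computational; the only place that requires care is to keep the Hermitian conventions consistent so that the scalar quadratic minimum in $v^{(j)}$ coincides with the Schur complement determinant of $(u+\psi)_j$, and this is precisely what ties the local optimization to the WZW-type determinants appearing on the right-hand side.
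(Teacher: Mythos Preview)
Your proposal is correct and follows essentially the same approach as the paper: a chain-rule expansion of $\Delta(\psi(f(z))+u(z,f(z)))$, completion of the square in the first-order jet $\partial f/\partial z_j$ using the positive-definite block $A=(\phi_{x_\lambda\bar x_\mu})$, identification of the minimum with $\det(u+\psi)_j/\det A$ via the Schur complement, and an affine choice of $f$ at $(z_0,x_0)$ to realize the minimizer. The paper writes the completion of the square explicitly as $\|\sqrt{A}\,\partial f/\partial z_i+\sqrt{A}^{-1}B_i\|^2-\|\sqrt{A}^{-1}B_i\|^2+u_{i\bar i}$ with $B_i=(u_{z_i\bar x_\lambda})_\lambda$, which is exactly your minimization in different notation.
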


\begin{proof}[Proof of Lemma \ref{min lem}]

We simply compute
\begin{align*}
&\Delta(\psi(f(z))+u(z,f(z)))=\\
    &\sum_{i ,\lambda, \mu}\psi_{ \mu\Bar{\lambda}}\frac{\partial f^{\mu}}{\partial z_i}\frac{\partial \Bar{f^{\lambda}}}{\partial \Bar{z_i}}+\sum_i u_{i\Bar{i}}+\sum_{i ,\lambda}u_{i \Bar{\lambda}}\frac{\partial \Bar{f^{\lambda}}}{\partial \Bar{z_i}}+\sum_{i,\mu}u_{\Bar{i}\mu}\frac{\partial f^{\mu}}{\partial z_i}+\sum_{i, \lambda, \mu}u_{ \mu\Bar{\lambda}}\frac{\partial f^{\mu}}{\partial z_i}\frac{\partial \Bar{f^{\lambda}}}{\partial \Bar{z_i}} 
\end{align*}
where $f^\mu$ is the $\mu$-th component of $f$. If we denote the matrix $(\psi_{\mu\Bar{\lambda}}+u_{\mu\Bar{\lambda}})$ by $A$ and the column vector $(u_{i\bar{\lambda}})$ by $B_i$, then the above expression is the same as 
\begin{equation}
    \sum_i \big( \langle A\frac{\partial f}{\partial z_i}, \frac{\partial f}{\partial z_i}\rangle+\langle B_i,\frac{\partial f}{\partial z_i}  \rangle+\overline{\langle B_i,\frac{\partial f}{\partial z_i}  \rangle}+u_{i\bar{i}} \big),
\end{equation}
where the angled inner product is the usual Euclidean inner product and $\partial f/\partial z_i$ is the column vector $(\partial f^{\mu}/\partial z_i)$. The matrix form can be further written as 
\begin{equation}\label{15}
    \sum_i \big(\|\sqrt{A}\frac{\partial f}{\partial z_i}+\sqrt{A}^{-1}B_i\|^2-\|\sqrt{A}^{-1}B_i\|^2+u_{i\Bar{i}}\big).
\end{equation}
 Notice that 
 \begin{equation}\label{16}
 \begin{aligned}
 \sum_i (-\|\sqrt{A}^{-1}B_i\|^2+u_{i\Bar{i}})&=\sum_i(u_{i\Bar{i}}-\langle A^{-1}B_i,B_i\rangle)=\sum_i(u_{i\Bar{i}}-\sum_{\lambda, \mu}u_{i\bar{\lambda}}(\psi+u)^{\bar{\lambda} \mu}u_{\Bar{i}\mu})\\
 &=\sum_i\frac{\det(u+\psi)_i}{\det (\psi_{\mu\Bar{\lambda}}+u_{\mu\Bar{\lambda}})}, 
 \end{aligned}
 \end{equation}
where the last equality can be deduced from Schur's formula for determinants of block matrices (for details see line 5, page 352 in \cite{Wu23}). From (\ref{15}) and (\ref{16}), we obtain (\ref{17}).

For a given point $(z_0, x_0)\in D\times X$, we may assume $(z_0, x_0)=(0,0)$ and consider $f(z)=\sum_i (-A^{-1}B_i)|_{(z_0,x_0)} z_i$. Evaluating at $(z_0, x_0)$, we see that the first term in (\ref{15}) is zero since $\partial f/\partial z_i (z_0)= -A^{-1}B_i (z_0,x_0)$. Hence,
$$\Delta(\psi(f(z))+u(z,f(z)))|_{z_0}=\sum_i\frac{\det(u+\psi)_i}{\det (\psi_{\mu\Bar{\lambda}}+u_{\mu\Bar{\lambda}})}|_{(z_0,x_0)}.$$
\end{proof}

By Lemma \ref{min lem}, we immediately have the following (see \cite[Lemma 4.1]{Wu23}).
\begin{lemma}\label{+ det}
  Suppose $u$ is a $C^2$ function on $D\times X$ and $\omega +i\partial\Bar{\partial}u(z,\cdot)>0$ on $X$ for all $z\in D$.  Then $u$ is $\omega$-subharmonic on graphs if and only if $$(i\partial \bar{\partial}u+\pi^*\omega)^{n+1}\wedge(i\sum_{j=1}^m dz_j\wedge d\bar{z}_j)^{m-1} \geq 0.$$
\end{lemma}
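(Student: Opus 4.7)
My plan is to deduce Lemma \ref{+ det} as a pointwise, local consequence of two ingredients already in hand: the algebraic identity (\ref{invariant}), which expresses the top form $(i\partial\bar\partial u+\pi^*\omega)^{n+1}\wedge(i\sum_j dz_j\wedge d\bar z_j)^{m-1}$ as a positive multiple of $\sum_j \det(u+\psi)_j$, and Lemma \ref{min lem}, which links this pointwise quantity to the Laplacian of $\psi(f(z))+u(z,f(z))$ along holomorphic graphs. The strategy is to work in a coordinate chart on $X$ on which a local potential $\psi$ of $\omega$ is defined and to use the positivity hypothesis $\omega+i\partial\bar\partial u(z,\cdot)>0$ to ensure $\det(\psi_{\mu\bar\lambda}+u_{\mu\bar\lambda})>0$, so that the sign of $\sum_j \det(u+\psi)_j$ coincides with the sign of the quotient appearing in (\ref{17}).

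For the ``$\Leftarrow$'' direction, I assume the top form is $\geq 0$. By (\ref{invariant}) this is exactly $\sum_j \det(u+\psi)_j \geq 0$ at every point of the chart. Fix an arbitrary holomorphic map $f$ from an open subset of $D$ into $X$. Applying the inequality part of Lemma \ref{min lem} gives
\begin{equation*}
\Delta\bigl(\psi(f(z))+u(z,f(z))\bigr)\;\geq\;\sum_j\frac{\det(u+\psi)_j}{\det(\psi_{\mu\bar\lambda}+u_{\mu\bar\lambda})}\;\geq\;0,
\end{equation*}
so $\psi(f(z))+u(z,f(z))$ is subharmonic. Since $u$ is $C^2$, it is automatically upper semicontinuous, so by Definition \ref{sub graphs} this is exactly the statement that $u$ is $\omega$-subharmonic on graphs. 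Covering $X$ by coordinate charts on which local potentials exist handles the global case.

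For the ``$\Rightarrow$'' direction, I fix an arbitrary point $(z_0,x_0)\in D\times X$ and a local potential $\psi$ near $x_0$. The second, sharper part of Lemma \ref{min lem} supplies a specific holomorphic $f$ (constructible as an affine map in a chart around $x_0$, and hence defined in a small neighborhood of $z_0$) with $f(z_0)=x_0$ for which (\ref{17}) holds with equality at $z_0$. Subharmonicity on this particular graph forces the left-hand side at $z_0$ to be $\geq 0$, so the right-hand side satisfies
\begin{equation*}
\sum_j\frac{\det(u+\psi)_j}{\det(\psi_{\mu\bar\lambda}+u_{\mu\bar\lambda})}\bigg|_{(z_0,x_0)}\;\geq\;0.
\end{equation*}
Since the denominator is positive by hypothesis, $\sum_j \det(u+\psi)_j\geq 0$ at $(z_0,x_0)$, and one more application of (\ref{invariant}) translates this into positivity of the top form at that point. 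Varying $(z_0,x_0)$ completes the proof.

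I do not anticipate any real obstacle: all the work has been done in Lemma \ref{min lem}, which already contains both the pointwise inequality and the sharp choice of $f$ realizing equality. The only care needed is that Lemma \ref{min lem} is local on $X$ (it uses a coordinate chart in which $\psi$ and the affine $f$ live), so one must check that $\omega$-subharmonicity on graphs and the global positivity of the $(n+m)$-form are themselves local conditions — which they are. Upper semicontinuity, the one ingredient of Definition \ref{sub graphs} not directly produced by the differential inequality, is free because $u$ is $C^2$.
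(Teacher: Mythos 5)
Your proof is correct and takes exactly the route the paper intends: both directions are read off pointwise from formula (\ref{invariant}) together with the inequality and the equality case of Lemma \ref{min lem}, using the positivity hypothesis to guarantee $\det(\psi_{\mu\bar\lambda}+u_{\mu\bar\lambda})>0$. The paper itself states the lemma as an immediate consequence of Lemma \ref{min lem} and defers the spelled-out argument to \cite[Lemma 4.1]{Wu23}; your write-up simply makes that deduction explicit.
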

On the other hand, for $\omega$-superharmonicity on graphs, we have the following partial result.

\begin{lemma}\label{- det}
    Suppose $u$ is a $C^2$ function on $D\times X$ and $\omega +i\partial\Bar{\partial}u(z,\cdot)>0$ on $X$ for all $z\in D$.  Then $u$ is $\omega$-superharmonic on graphs if  $$(i\partial \bar{\partial}u+\pi^*\omega)^{n+1}\wedge(i\sum_{j=1}^m dz_j\wedge d\bar{z}_j)^{m-1} \leq 0.$$
\end{lemma}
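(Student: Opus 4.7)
The plan is to establish $\omega$-superharmonicity on graphs via the viscosity characterization of subharmonicity: an upper semicontinuous function $h$ on $U$ is subharmonic iff, for every point $z_0 \in U$ and every $C^2$ function $\phi$ defined near $z_0$ with $\phi \geq h$ locally and $\phi(z_0)=h(z_0)$, one has $\Delta\phi(z_0)\geq 0$. Given any $v$ that is $\omega$-subharmonic on graphs in $U\times X$, set $h(z):=\sup_{x\in X}v(z,x)-u(z,x)$; this is upper semicontinuous by Lemma \ref{upper lemma}. I want to verify the viscosity condition for $h$ pointwise.

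Fix $z_0\in U$ and a $C^2$ test function $\phi$ with $\phi\geq h$ near $z_0$ and $\phi(z_0)=h(z_0)$. The case $h(z_0)=-\infty$ is vacuous because a $C^2$ function cannot equal $-\infty$. Otherwise, since $v-u$ is usc on the compact fiber $\{z_0\}\times X$, the supremum is attained at some $x_0\in X$, so $v(z_0,x_0)-u(z_0,x_0)=\phi(z_0)$. Working in a local coordinate chart of $X$ where a potential $\psi$ of $\omega$ is defined, apply the equality part of Lemma \ref{min lem} to $u$ at $(z_0,x_0)$: there exists a holomorphic map $f$ from a neighborhood of $z_0$ to $X$ with $f(z_0)=x_0$ such that
\begin{equation*}
\Delta\bigl(\psi(f(z))+u(z,f(z))\bigr)\bigr|_{z_0}=\sum_j\frac{\det(u+\psi)_j}{\det(\psi_{\mu\bar\lambda}+u_{\mu\bar\lambda})}\bigg|_{(z_0,x_0)}.
\end{equation*}
The hypothesis together with identity (\ref{invariant}) forces $\sum_j\det(u+\psi)_j\leq 0$, while the denominator is positive because $\omega+i\partial\bar\partial u(z,\cdot)>0$, so the right-hand side is $\leq 0$.

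Now comes the key rearrangement. Restricting the inequality $v(z,x)-u(z,x)\leq\phi(z)$ to the graph $x=f(z)$ and adding $\psi(f(z))$ to both sides gives
\begin{equation*}
v(z,f(z))+\psi(f(z))\ \leq\ u(z,f(z))+\psi(f(z))+\phi(z),
\end{equation*}
with equality at $z_0$. The left-hand side is subharmonic in $z$ by the definition of $\omega$-subharmonicity on graphs applied to $v$, while the right-hand side is $C^2$. Thus the right-hand side is a $C^2$ test function touching the subharmonic left-hand side from above at $z_0$, which by the viscosity property of subharmonic functions forces $\Delta(\text{RHS})|_{z_0}\geq 0$. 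Combined with the estimate on $\Delta(\psi(f(z))+u(z,f(z)))|_{z_0}\leq 0$ obtained above, this yields
\begin{equation*}
\Delta\phi(z_0)\ \geq\ -\Delta\bigl(\psi(f(z))+u(z,f(z))\bigr)\bigr|_{z_0}\ \geq\ 0,
\end{equation*}
which is exactly the viscosity subsolution condition, establishing the subharmonicity of $h$.

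The main conceptual move, and the step that took me the longest to see, is the regrouping $(v-u)\leq\phi \Longleftrightarrow (v+\psi)\leq(u+\psi+\phi)$: this transfers the problem from trying to bound the Laplacian of an a priori non-smooth object $v$ directly into a standard viscosity statement for the subharmonic function $v(z,f(z))+\psi(f(z))$. After that, everything is routine: the lemma supplies precisely the holomorphic curve $f$ needed to convert $\det(u+\psi)_j$ into a Laplacian computation, and the hypothesis on the top form controls its sign. Minor technical care is needed to ensure the argument is local so the chart of $\psi$ suffices, and to treat the trivial case $h\equiv -\infty$; no additional regularity on $v$ is required beyond its defining property.
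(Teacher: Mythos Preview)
Your proof is correct and follows essentially the same strategy as the paper: locate the maximizer $x_0$, produce the special graph $f$ via Lemma~\ref{min lem}, compare $h$ with $v-u$ along that graph, and combine the subharmonicity of $v(z,f(z))+\psi(f(z))$ with the nonpositive Laplacian of $u(z,f(z))+\psi(f(z))$ at $z_0$. The only difference is the final subharmonicity criterion: the paper uses the Blaschke--Privalov/Saks mean-value test $\liminf_{r\to 0}r^{-2}\bigl(\fint_{B(z_0,r)}h-h(z_0)\bigr)\geq 0$ (citing \cite{Saks} or \cite[Lemma 11.2]{CoifmanSemmes}), whereas you package the same inequalities through the viscosity characterization with $C^2$ test functions---a cosmetic repackaging rather than a different idea.
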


\begin{proof}
    Let $v$ be defined on $U\times X$ with $U$ an open subset of $D$ and $v$ be $\omega$-subharmonic on graphs. Define $h(z)=\sup_{x\in X}v(z,x)-u(z,x)$. We want to show that $h(z)$ is subharmonic in $U$. By Lemma \ref{upper lemma}, $h$ is upper semicontinuous.

    Fixing $z_0$ in $U$, we have $h(z_0)=v(z_0,x_0)-u(z_0,x_0)$ for some $x_0$ in $X$ because $X$ is compact and $v-u$ is upper semicontinuous.
Using Lemma \ref{min lem}, we can find a holomorphic function $f$ from an open subset of $D$ to $X$ with $f(z_0)=x_0$  satisfying
\begin{equation}\label{small}
\Delta\big(u(z,f(z))+\psi(f(z))\big)|_{z_0}= \left. \sum_{j=1}^m\frac{\det(u+\psi)_j}{\det (\psi_{\mu\Bar{\lambda}}+u_{\mu\Bar{\lambda}})}\right \vert_{(z_0,x_0)}\leq 0; 
\end{equation}
the inequality is due to our assumption that $(i\partial \bar{\partial}u+\pi^*\omega)^{n+1}\wedge(i\sum_{j=1}^m dz_j\wedge d\bar{z}_j)^{m-1} \leq 0$ and formula (\ref{invariant}). 

According to the definition of $h$, we have
\begin{equation}\label{321}
 h(z)\geq v(z, f(z))-u(z,f(z))   
\end{equation}
with equality at $z_0$. We denote by $\fint_{B(z_0,r)}h(z)$ the average of $h$ over a ball centered at $z_0$ with radius $r$ in $\mathbb{C}^m$. From the inequality (\ref{321}), we see that 
\begin{align*}
 \fint_{B(z_0,r)}h(z)-h(z_0)\geq& \big(\fint_{B(z_0,r)} v(z, f(z))-u(z,f(z))\big) -   \big( v(z_0, x_0)-u(z_0,x_0)\big)\\
=&\big(\fint_{B(z_0,r)} v(z, f(z))+\psi(f(z))-u(z,f(z))-\psi(f(z))\big)\\ &-   \big( v(z_0, x_0)+\psi(x_0)-u(z_0,x_0)-\psi(x_0)\big)\\
\geq& -\big( \fint_{B(z_0,r)} u(z,f(z))+\psi(f(z))   \big)+u(z_0,x_0)+\psi(x_0), 
\end{align*}
where the last inequality comes from the fact that $v$ is $\omega$-subharmonic on graphs and hence $v(z,f(z))+\psi(f(z))$ is subharmonic. As a consequence, 
\begin{align*}
    &\liminf_{r\to 0} \frac{1}{r^2} \big(\fint_{B(z_0,r)} h(z)-h(z_0)\big)\\
    \geq& \liminf_{r\to 0} -\frac{1}{r^2}  \big(\fint_{B(z_0,r)} 
    u(z,f(z))+\psi(f(z))   -u(z_0,x_0)-\psi(x_0)
    \big)\\
    =&-\Delta \big( u(z,f(z))+\psi(f(z)) \big)| _{z_0}\geq 0,
\end{align*}
where the last inequality comes from (\ref{small}). Hence, $h$ is subharmonic by \cite{Saks} or \cite[Lemma 11.2]{CoifmanSemmes}, and $u$ is $\omega$-superharmonic on graphs.
\end{proof}

Let us now prove Theorem \ref{characterize}:

\begin{proof}[Proof of Theorem \ref{characterize}]
If $u$ solves the WZW equation, then by Lemma \ref{+ det} and Lemma \ref{- det}, $u$ is $\omega$-harmonic on graphs.

Conversely, assume $u$ is $\omega$-harmonic on graphs in $D\times X$. Suppose at a point $p$ in $D\times X$,
$$(i\partial \bar{\partial}u+\pi^*\omega)^{n+1}\wedge(i\sum_{j=1}^m dz_j\wedge d\bar{z}_j)^{m-1} >0 .$$
By formula (\ref{invariant}), we see $\sum_j \det(u+\psi)_j$ is positive at $p$. Since $u$ is $C^2$, there exists a compact neighborhood $N$ of $p$ in $D\times X$ where $\sum_j \det(u+\psi)_j$ is positive. Choose a smooth function $\rho\geq 0$ supported in $N$ but not identically zero such that $\sum_j \det(\rho+u+\psi)_j>0$ in $N$. By Lemma \ref{+ det}, the function $\rho+u$ is still $\omega$-subharmonic on graphs in $D\times X$ (this is where we use $\omega$-subharmonicity of $u$). However, since $u$ is $\omega$-superharmonic on graphs, the function $z\mapsto \sup_{x\in X} (\rho+u-u)(z,x)=\sup_{x\in X} \rho(z,x)$ is subharmonic in $D$. But the function $z\mapsto \sup_{x\in X}\rho(z,x)  $ is zero 
near the boundary $\partial D$, and by the maximum principle we have  $z\mapsto \sup_{x\in X} \rho(z,x) \leq 0 $, a contradiction. Therefore, over $D\times X$ $$(i\partial \bar{\partial}u+\pi^*\omega)^{n+1}\wedge(i\sum_{j=1}^m dz_j\wedge d\bar{z}_j)^{m-1} \leq 0 .$$ The opposite inequality also holds by Lemma \ref{+ det}, so $u$ solves the WZW equation.
\end{proof}

Lemma \ref{- det} is only a partial result, similar to \cite[Theorem 4.2 (b)]{Rochberg} and \cite[Theorem 15.4 (b)]{CoifmanSemmes} that studied interpolation of norms; the full equivalence of their case was proved by Slodkowski in \cite[Theorem 6.6]{SlodI} and in \cite{SlodIII}. In the study of interpolation of norms, one can simply take the dual of a norm, but in our case, it is not obvious how to take dual on a function defined on $D\times X$. Nevertheless, careful examination suggests some type of Legendre transform. We give a heuristic computation below.

The idea coming from \cite{CoifmanSemmes,legendredualBCEKR} is to replace the inner product in the classical Legendre transformation by a local potential of $\omega$. To that end, it seems necessary to assume that $\omega$ is real analytic, so locally $\omega=i\partial\bar{\partial }\psi$ for some real analytic $\psi$, and we can consider the polarization $\psi_\mathbb{C}$ of $\psi$. If $B$ a ball  centered at 0 in $\mathbb{C}^n$  is a coordinate system of $X$ and $\psi(x)=\sum c_{\alpha\beta}x^\alpha\bar{x}^\beta$ in $B$, then $\psi_\mathbb{C}(x,y)=\sum c_{\alpha \beta}x^\alpha \bar{y}^\beta$ in some neighborhood of the diagonal in $B\times B$. In order to have a transformation that is defined globally on the manifold $X$, we will use the Calabi diastasis function 
\begin{equation}
    D_\omega(x,y)=\psi(x)+\psi(y)-2\text{Re}\psi_\mathbb{C}(x,y).
\end{equation}
For a function $u:D\times X\to \mathbb{R}$, we define the Legendre transform 
\begin{equation}\label{L transform}
    \hat{u}(z,y)=\sup_{x\in X} -D_\omega(x,\bar{y})-u(z,x).
\end{equation}
Note that $\hat{u}$ is a function on $D\times X$ and the transformation is performed only in the $x$ variable. It is not hard to see that the K\"ahler form $i\partial\bar{\partial}\psi(\bar{x})$ is globally defined on $X$, and we will denote it by $\tilde{\omega}$. (The extra complex conjugation $\bar{y}$ in (\ref{L transform}) has appeared in \cite{Rochberg,SlodI, CoifmanSemmes} when they studied duality for norms).

We have the following duality: 
\begin{equation}\label{L dual}
  \text{if $u$ is $\omega$-superharmonic on graphs, then $\hat{u}$ is $\tilde{\omega}$-subharmonic on graphs.}  
\end{equation}
Indeed, letting $f$ be a holomorphic map from some open subset of $D$ to $X$, we want to show that the following function is subharmonic
\begin{align*}
  \hat{u}(z,f(z))+\psi(\overline{f(z)})=\sup_{x\in X} \big( 2\text{Re}\psi_\mathbb{C}(x,\overline{f(z)})-\psi(x)-\psi(\overline{f(z)})  -u(z,x)\big)  +\psi(\overline{f(z)})\\
  =\sup_{x\in X} 2\text{Re}\psi_\mathbb{C}(x,\overline{f(z)})-\psi(x)  -u(z,x).  
\end{align*}
This is true since $(z,x)\mapsto 2\text{Re}\psi_\mathbb{C}(x,\overline{f(z)})-\psi(x)$ is $\omega$-subharmonic on graphs and $u(z,x)$ is $\omega$-superharmonic on graphs. That $(z,x)\mapsto 2\text{Re}\psi_\mathbb{C}(x,\overline{f(z)})-\psi(x)$ is $\omega$-subharmonic on graphs is because for a holomorphic map $g(z)$ from some open subset of $D$ to $X$, the function $$2\text{Re}\psi_\mathbb{C}((g(z)),\overline{f(z)})-\psi(g(z))+\psi(g(z))=2\text{Re}\psi_\mathbb{C}((g(z)),\overline{f(z)})$$
is subharmonic by $\psi_\mathbb{C}(x,y)=\sum c_{\alpha \beta}x^\alpha \bar{y}^\beta$.

\begin{conjecture}
    We conjecture that the Legendre transform will also turn $\omega$-subharmonicity to $\tilde{\omega}$-superharmonicity, and this fact along with (\ref{L dual}) can be used to prove the converse of Lemma \ref{- det}.
\end{conjecture}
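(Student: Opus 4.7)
The plan is to attack the conjecture in two stages matching its two assertions.

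The first and harder claim is that $\hat{u}$ is $\tilde{\omega}$-superharmonic on graphs whenever $u$ is $\omega$-subharmonic on graphs. The naive attempt is to mirror the proof of (\ref{L dual}). Given $\tilde{v}$ that is $\tilde{\omega}$-subharmonic on graphs, expand
\[
\tilde{v}(z,y) - \hat{u}(z,y) \;=\; \tilde{v}(z,y) + \inf_{x\in X}\bigl[D_\omega(x,\bar y) + u(z,x)\bigr],
\]
and try to bound $h(z) := \sup_y[\tilde{v}(z,y) - \hat{u}(z,y)]$ below by a subharmonic minorant. The obstruction is structural: because $-\hat{u}$ is an infimum over $x$, specializing a single $x$ yields only an upper bound on the bracketed expression, so the variational argument used for (\ref{L dual}) does not reverse. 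I would instead route the proof through involution: first show, under mild regularity (say $u$ continuous with $\omega + i\partial\bar\partial u(z,\cdot)\ge 0$), that the double Legendre transform recovers $u$, so that $\hat{\hat{u}} = u$; then upgrade (\ref{L dual}) and its $\tilde{\omega}$-analogue ($\tilde{w}$ is $\tilde{\omega}$-super on graphs $\Rightarrow \hat{\tilde{w}}$ is $\omega$-sub on graphs) to equivalences. The implication sought then follows from the chain $u$ is $\omega$-sub $\iff \hat{\hat{u}}$ is $\omega$-sub $\iff \hat{u}$ is $\tilde{\omega}$-super.

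Granting the first part, the converse of Lemma \ref{- det} follows as indicated. Assume $u$ is $C^2$, $\omega + i\partial\bar\partial u(z,\cdot)>0$, and $\omega$-superharmonic on graphs. By (\ref{L dual}), $\hat{u}$ is $\tilde{\omega}$-subharmonic on graphs; the strict positivity of the $x$-block of $(u+\psi)$ allows the implicit function theorem to produce a smooth argmax $x_0(z,y)$, and inversion of this block yields $\hat{u}\in C^2$ with $\tilde{\omega}+i\partial\bar\partial \hat{u}(z,\cdot)>0$. Lemma \ref{+ det} applied to $(\hat{u},\tilde{\omega})$ gives
\[
(i\partial\bar\partial\hat{u}+\pi^*\tilde{\omega})^{n+1}\wedge \bigl(i\textstyle\sum_{j=1}^m dz_j\wedge d\bar z_j\bigr)^{m-1}\ge 0.
\]
A Schur-complement calculation -- transparent already in the toy case $u(z,x) = -|x|^2/2 + f(z)$ on $\mathbb{C}\times\mathbb{C}$, where one computes $\hat{u}(z,y) = |y|^2/2 - f(z)$ and the sign of the $(z_j\bar z_j)$-entry is flipped -- shows that $\det(\hat{u}+\tilde{\psi})_j$ at $(z_0,y_0)$ equals $-\det(u+\psi)_j$ at $(z_0,x_0(z_0,y_0))$ times a positive factor that is a power of $\det(u+\psi)_{x\bar x}$. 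The involution from Part 1 guarantees that $y\mapsto x_0(z_0,y)$ is surjective, so the inequality transfers to every point of $D\times X$, and (\ref{invariant}) delivers $(i\partial\bar\partial u+\pi^*\omega)^{n+1}\wedge(\cdots)^{m-1}\le 0$.

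The crux is unquestionably Part 1, and within it the involution identity $\hat{\hat{u}} = u$ together with the upgrade of (\ref{L dual}) to an equivalence. The sup-inf structure of $h$ blocks a direct variational proof, and I expect the real technical effort to go into identifying a function class on which the Calabi-diastasis Legendre transform is a genuine involution, most likely using the compactness of $X$ to force interior argmaxima and requiring $D_\omega$ to be extended globally beyond a single coordinate neighbourhood. The parallel duality results of Slodkowski \cite{SlodI,SlodIII} in the norm-interpolation setting provide a template, but non-trivial modifications will be needed to accommodate the curvature of $X$ and the nonlinearity intrinsic to the K\"ahler setting.
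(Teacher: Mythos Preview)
The statement you are attempting is a \emph{conjecture} in the paper; the author does not supply a proof, only the heuristic computation leading to (\ref{L dual}). So there is no paper proof to benchmark against, and your proposal should be read as an outline of a possible attack on an open problem.

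The central gap is in your Part~1. You propose to derive ``$u$ $\omega$-sub $\Rightarrow$ $\hat u$ $\tilde\omega$-super'' from (i) involution $\hat{\hat u}=u$ and (ii) an ``upgrade'' of (\ref{L dual}) and its $\tilde\omega$-analogue to equivalences. But step (ii) is not a consequence of (i) together with the one-directional implications already known. Concretely, the reverse of the $\tilde\omega$-analogue reads ``$\hat{\tilde w}$ $\omega$-sub $\Rightarrow$ $\tilde w$ $\tilde\omega$-super''; setting $\tilde w=\hat u$ and using involution, this becomes exactly ``$u$ $\omega$-sub $\Rightarrow$ $\hat u$ $\tilde\omega$-super'', which is the conjecture itself. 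In set-theoretic terms, the proven directions say $\widehat{P_\omega}\subset S_{\tilde\omega}$ and $\widehat{P_{\tilde\omega}}\subset S_\omega$ (super maps into sub), and involution says $\hat{\ }$ is a bijection; nothing here forces $\widehat{S_\omega}\subset P_{\tilde\omega}$. So the ``upgrade'' is not a corollary of involution but a restatement of what has to be proved. A genuine attack would need an independent argument for the sub$\to$super direction, presumably by adapting Slodkowski's duality proofs in \cite{SlodI,SlodIII} to the diastasis kernel --- and, as you note, this already requires making sense of $D_\omega(x,\bar y)$ globally on $X\times X$, which the paper does not do.

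Your Part~2 is more self-contained and does not actually use the conjectured sub$\to$super direction: it runs entirely on the proven (\ref{L dual}), regularity of $\hat u$, and a Schur-complement sign flip between $\det(u+\psi)_j$ and $\det(\hat u+\tilde\psi)_j$. This is a reasonable strategy for the converse of Lemma~\ref{- det}, but it is somewhat orthogonal to the paper's own hint (which explicitly invokes \emph{both} duality directions), and the two ingredients you need --- $C^2$ regularity of $\hat u$ via the implicit function theorem, and the exact sign relation between the Schur complements under the diastasis Legendre transform --- are asserted rather than verified. The toy computation you give is in flat $\mathbb{C}\times\mathbb{C}$ with the classical Legendre transform, not with the Calabi diastasis on a compact $X$, so it does not yet test the curvature terms that distinguish this setting from the norm-interpolation one.
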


\section{Proof of Theorem \ref{sup is har}}\label{section sup is har}

Let $D$ be a regular bounded open set in $\mathbb{C}^m$. Let $\nu$ be a continuous map from $\partial D\times X $ to $ \mathbb{R}$ such that $\nu(z,\cdot)\in \text{PSH}(X,\omega)$ for $z\in \partial D$. Recall the Perron family 
\begin{align*}
G_\nu=\{u \in\textup{usc($D\times X$)}:& \textup{ $u$ is $\omega$-subharmonic on graphs}, \textup{and }\limsup_{D \ni z \to \zeta\in \partial D}u(z,x)  \leq \nu(\zeta,x)\}.
\end{align*} 
Denote by $V$ the sup of the Perron family $G_\nu$. In this section, we assume $\omega$ is in an integral class, so $V$ is in $G_\nu$, $V$ attains the boundary data $\nu$, and $V$ extends  continuously on $\overline{D}\times X$ according to  Lemma \ref{barrier}. The goal of the section is to show that $V$ is $\omega$-harmonic on graphs.

\begin{lemma}\label{max principle}
Let $u$ be $\omega$-subharmonic on graphs in $D\times X$ and upper semicontinuous on $\overline{D}\times X$. The function $$h(z):=\sup_{x\in X} u(z,x)-V(z,x)$$ satisfies $\sup_D h \leq \sup_{\partial D}h$.  
\end{lemma}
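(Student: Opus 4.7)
The plan is to show that the function $u - M$ lies in the Perron family $G_\nu$, where $M := \sup_{\partial D} h$. Once this is established, the maximality property of $V = \sup G_\nu$ forces $u(z,x) - M \leq V(z,x)$ for every $(z,x) \in D \times X$, which is precisely the claim $h(z) \leq M = \sup_{\partial D} h$.

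The task therefore reduces to verifying the three conditions defining $G_\nu$ for $w(z,x) := u(z,x) - M$. Upper semicontinuity on $D \times X$ and $\omega$-subharmonicity on graphs are inherited directly from $u$, since subtracting a constant leaves the subharmonic functions $\psi(f(z)) + u(z, f(z))$ shifted by $-M$, still subharmonic. The only condition needing real work is the boundary estimate. Here I would use that, by Lemma \ref{barrier}, $V$ is continuous on $\overline{D} \times X$ and equals $\nu$ on $\partial D \times X$. Then for any $\zeta \in \partial D$ and $x \in X$,
\[
u(\zeta, x) - \nu(\zeta, x) \;=\; u(\zeta, x) - V(\zeta, x) \;\leq\; h(\zeta) \;\leq\; M,
\]
so $u(\zeta, x) - M \leq \nu(\zeta, x)$. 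Combined with the upper semicontinuity of $u$ on $\overline{D} \times X$, this gives
\[
\limsup_{D \ni z \to \zeta} w(z, x) \;\leq\; u(\zeta, x) - M \;\leq\; \nu(\zeta, x),
\]
which is the remaining condition for $w \in G_\nu$.

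The main obstacle, if any, is bookkeeping: one should confirm that $M$ is finite so the subtraction makes sense. This follows because $u - V$ is upper semicontinuous on $\overline{D} \times X$ (difference of an usc function and a continuous function), so $h$ is upper semicontinuous on $\overline{D}$ by Lemma \ref{upper lemma}, and upper semicontinuous functions are bounded above on the compact set $\partial D$; the degenerate case $M = -\infty$, which would force $u \equiv -\infty$ on $\partial D \times X$, is handled by an obvious separate argument (or yields the inequality trivially). Granted finiteness of $M$, the Perron maximality gives $w \leq V$ on $D \times X$, hence $h \leq M$ there, and the lemma follows.
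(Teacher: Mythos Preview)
Your proof is correct and follows essentially the same approach as the paper: define $M=\sup_{\partial D}h$, observe that $u-M$ is $\omega$-subharmonic on graphs with boundary values bounded by $\nu$ (using $V=\nu$ on $\partial D\times X$), conclude $u-M\in G_\nu$, and apply the maximality of $V$. Your write-up is in fact slightly more careful than the paper's, since you explicitly verify the $\limsup$ boundary condition via upper semicontinuity of $u$ on $\overline{D}\times X$ and address the finiteness of $M$.
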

\begin{proof}
If we denote $\sup_{\partial D}h$ by $M$, then $ u(z,x)-V(z,x)\leq M$ for $(z,x)\in \partial D\times X$. Since $V=\nu$ on $\partial D \times X$, we have   $u(z,x)-\nu(z,x)\leq M$ for $(z,x)\in\partial D\times X$. The function $u(z,x)-M$ is $\omega$-subharmonic on graphs, and hence it is in $G_{\nu}$. As a result, $u(z,x)-M\leq V(z,x)$ on $D\times X$ and $\sup_D h \leq M$.  
\end{proof}


Let $U$ be a regular open subset of $D$. Since the sup $V$ is in $G_\nu$, $V$ is $\omega$-subharmonic on graphs and by \cite[Lemma 3.1]{Wu23}, for fixed $z$, $V(z, \cdot)$ is in $\text{PSH}(X,\omega)$, so we can use $V|_{\partial U \times X}$ as boundary data and consider the Perron family $G_{V|_{\partial U\times X}}$. We have the following reiteration lemma.

\begin{lemma}\label{reiteration}
The sup of $G_{V|_{\partial U\times X}}$ which we denote by $\mathcal{V}$ satisfies $\mathcal{V}=V$ on $U\times X$.
 
\end{lemma}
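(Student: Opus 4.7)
The plan is to run the standard Perron reiteration/harmonic modification argument, showing both inequalities $V \leq \mathcal{V}$ and $\mathcal{V} \leq V$ on $U \times X$.

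For $V \leq \mathcal{V}$, I would check that $V$ itself is a competitor in $G_{V|_{\partial U \times X}}$. It is $\omega$-subharmonic on graphs on $D\times X$ and in particular on $U \times X$; and by Lemma \ref{barrier} applied to $D$, $V$ is continuous on $\overline{D}\times X$, so $\limsup_{U \ni z \to \zeta \in \partial U} V(z,x) = V(\zeta,x)$, which is the required boundary control. Hence $V \in G_{V|_{\partial U \times X}}$ and $V \leq \mathcal{V}$.

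For the reverse inequality, apply Lemma \ref{barrier} to $U$ with the continuous boundary data $V|_{\partial U \times X}$ (which takes values in $\text{PSH}(X,\omega)$ by the preamble of the lemma), yielding $\mathcal{V}$ continuous on $\overline{U}\times X$ with $\mathcal{V} = V$ on $\partial U \times X$. Glue these together:
\[
W(z,x) =
\begin{cases} \mathcal{V}(z,x), & (z,x) \in U\times X, \\ V(z,x), & (z,x) \in (D\setminus U)\times X. \end{cases}
\]
Continuity of $W$ on $D\times X$ follows from $\mathcal{V} = V$ on $\partial U \times X$. The boundary behavior at $\partial D$ is inherited from $V$ (for $\zeta \in \partial D \setminus \overline{U}$ trivially, and for $\zeta \in \partial D \cap \overline{U}$ via continuity of $\mathcal{V}$ and $\mathcal{V}(\zeta,\cdot) = V(\zeta,\cdot) = \nu(\zeta,\cdot)$). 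Provided $W$ is $\omega$-subharmonic on graphs, it lies in $G_\nu$, so $W \leq V$, which restricted to $U\times X$ gives $\mathcal{V} \leq V$, finishing the proof.

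The main technical point — and what I expect to be the principal obstacle — is verifying that $W$ is $\omega$-subharmonic on graphs \emph{across the seam} $\partial U$. Fix a holomorphic $f$ from an open $\Omega \subset D$ to $X$ and a local potential $\psi$ of $\omega$; set $h(z) = \psi(f(z)) + W(z,f(z))$ and $g(z) = \psi(f(z)) + V(z,f(z))$. On the open set $\Omega \cap U$, $h$ is subharmonic by the $\omega$-subharmonicity on graphs of $\mathcal{V}$; on the open set $\Omega \setminus \overline{U}$, $h = g$ is subharmonic by that of $V$. At a point $z_0 \in \Omega \cap \partial U$, $\mathcal{V}(z_0,\cdot) = V(z_0,\cdot)$ gives $h(z_0) = g(z_0)$, while $\mathcal{V} \geq V$ on $U \times X$ combined with $W = V$ on $D \setminus U$ gives $h \geq g$ throughout $\Omega$. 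Hence for small $r > 0$,
\[
\fint_{B(z_0,r)} h \;\geq\; \fint_{B(z_0,r)} g \;\geq\; g(z_0) \;=\; h(z_0),
\]
which is the sub-mean-value inequality at $z_0$. Combined with continuity, this makes $h$ subharmonic on all of $\Omega$, so $W$ is $\omega$-subharmonic on graphs, closing the argument.
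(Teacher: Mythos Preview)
Your argument is correct and follows the same Perron reiteration template as the paper, but the reverse inequality is implemented differently. You invoke Lemma~\ref{barrier} on $U$ to obtain continuity and boundary values of $\mathcal{V}$, then glue $\mathcal{V}$ itself to $V$ across $\partial U$ and verify $\omega$-subharmonicity on graphs via the comparison $W\ge V$ at the seam. The paper instead takes an arbitrary competitor $u_0\in G_{V|_{\partial U\times X}}$ and glues
\[
u=\begin{cases} V & \text{on }(D\setminus U)\times X,\\ \max(u_0,V) & \text{on }U\times X,\end{cases}
\]
which lands in $G_\nu$ directly; taking the sup over $u_0$ then gives $\mathcal{V}\le V$. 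The $\max$ device is a bit more elementary---upper semicontinuity and the inequality $u\ge V$ (hence the seam sub-mean-value property) are automatic without first establishing that $\mathcal{V}$ is continuous or attains its boundary values, so no second appeal to Lemma~\ref{barrier} is needed. Your route works equally well here since the integrality hypothesis on $\omega$ is already in force in this section, and your explicit seam verification is exactly the mechanism that makes the paper's ``straightforward to check'' claim go through.
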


\begin{proof}
    For any $u\in G_\nu$, the restriction $u|_{U\times X}$ is in $G_{V|_{\partial U\times X}}$, so $u|_{U \times X}\leq \mathcal{V}$. Since $V$ is in $G_\nu$, we have $V\leq \mathcal{V}$ on $U\times X$. 
Conversely, consider $u_0\in G_{V|_{\partial U\times X}}$ and define 
\begin{equation}
u=\begin{cases}
  V \text{ on } (D\setminus U)\times X, \\ 
\max(u_0, V)\text{ on } U\times X.
\end{cases}    
\end{equation}
Using the fact $V$ and $u_0$ are both $\omega$-subharmonic on graphs, it is straightforward to check $u$ is in $G_\nu$. Therefore, $u\leq V$ and $u_0\leq V$ on $U\times X$, hence $\mathcal{V} \leq V$ on $U\times X$.
\end{proof}

\begin{proof}[Proof of Theorem \ref{sup is har}]

We already know that $V$ is in $G_\nu$, so $V$ is $\omega$-subharmonic on graphs. For $\omega$-superharmonicity, we let $u$ be $\omega$-subharmonic on graphs in $U\times X$ with $U$ an open subset of $D$. We want to show that  
$h(z):=\sup_{x\in X} u(z,x)-V(z,x)$ is subharmonic in $U$. By Lemma \ref{upper lemma}, $h$ is upper semicontinuous.

It suffices to show that, for any ball $B$  with $\overline{B}\subset U$ and any harmonic function $g(z)$ in $B$ continuous up to $\overline{B}$, the function $h-g$ satisfies the maximum principle on $B$: $\sup_B (h-g) \leq \sup_{\partial B}(h-g)$. It is clear that $u(z, x)-g(z)$ is $\omega$-subharmonic on graphs in $B\times X$ and upper semicontinuous on $\overline{B}\times X$. Meanwhile, Lemma \ref{reiteration} says that the Perron family with boundary data $V|_{\partial B \times X}$ still has the same sup $V$. Therefore, using Lemma \ref{max principle} on $\overline{B}\times X$, we see that  $$\sup_{x\in X}u(z, x)-g(z)-V(z,x)=h(z)-g(z)$$ satisfies the maximum principle on $B$. Therefore $h$ is subharmonic in $U$ and $V$ is $\omega$-superharmonic on graphs.    
\end{proof}

\section{Harmonic maps}\label{section harmonic}

Let us recall the setup in Subsection \ref{subsection har}. Let $L$ be a positive line bundle over $X$, and $h$ a positively curved metric on $L$ with curvature $\omega$. For a positive integer $k$ we denote by $\mathcal H_k$ the space of inner products on $H^0(X,L^k)$.

Let $D'$ be a regular bounded open set in $\mathbb{R}^m$. We denote the coordinates in $D'$ by $t=(t_1,\dots, t_m)\in  \mathbb{R}^m$. Let $\nu$ be a real-valued continuous function on $\partial D'\times X$ such that for fixed $t\in \partial D'$ the function $ \nu(t,\cdot)$ on $X$ is in $ \text{PSH}(X,\omega)$. 

We recall the Hilbert map and the Fubini--Study map. The Hilbert map $H_k: \text{PSH}(X,\omega)\to \mathcal H_k$ is $$H_k(\phi)(s,s)=\int_X h^k(s,s)e^{-k\phi}\omega^n, \text{ for $\phi\in \text{PSH}(X,\omega)$ and $s\in H^0(X,L^k)$}.$$ The Fubini--Study map $FS_k: \mathcal H_k\to \mathcal H_{\omega}$ is
\begin{equation*}
FS_k(G)(x)=\frac{1}{k}\log \sup_{s\in H^0(X,L^k),G(s,s)\leq 1}h^k(s,s)(x), \text{ for $G\in \mathcal H_k$ and } x\in X.
\end{equation*}
Refer to \cite{DLR,DarvasWu} for more details about the maps.

For the boundary data $H_k(\nu):\partial D'\to \mathcal H_k$, we know by \cite{Hamilton} that the following Dirichlet problem for the harmonic map equation has a unique solution $V^k:\overline{D'}\to \mathcal H_k $  
\begin{equation}
\begin{dcases}\label{harmonic matrix 1}
 \sum_{j=1}^m \frac{\partial}{\partial t_j}\big((V^k)^{-1}  \frac{\partial V^k}{\partial t_j}\big)=0 \\
V^k|_{\partial D'}=H_k(\nu).
\end{dcases}    
\end{equation}

\begin{proof}[Proof of Theorem \ref{thm weak harmonic}]

For the coordinates $t=(t_1,\dots, t_m)$ in $D'\subset \mathbb{R}^m$, we complexify the variable $t_j$ by adding a variable $\sqrt{-1}s_j$ with $s_j\in \mathbb{R}$ and introducing $$e^{t_j+\sqrt{-1}s_j}=\zeta_j\in \mathbb{C}$$ so that $t_j=\log |\zeta_j|$. We denote by $D\subset \mathbb{C}^m$ the corresponding set for the complex variables $(\zeta_1,\dots, \zeta_m)$; namely, $$D=\{(\zeta_1,\dots, \zeta_m)\in \mathbb{C}^m: (\log|\zeta_1|,\dots, \log|\zeta_m|)\in D' \}.$$ We define $\tilde{\nu}:\partial D\to \text{PSH}(X,\omega)$ by setting 
\begin{equation}
  \tilde{\nu}(\zeta_1,\dots, \zeta_m)=\nu(\log |\zeta_1|,\dots, \log |\zeta_m|).  
\end{equation}
Then as in \cite{Wu23}, we use the boundary data $\Tilde{\nu}$ to consider the Perron family 
\begin{align*}
G_{\Tilde{\nu}}:=\{u \in\text{usc($D\times X$) }:& \text{ $u$ is $\omega$-subharmonic on graphs}, \text{and }\limsup_{D \ni \zeta \to \eta\in \partial D}u(\zeta,x)  \leq \Tilde{\nu}(\eta,x)\}.
\end{align*}
A small change from the setup in \cite{Wu23} is that the metric we use on $D$ here is not Euclidean $\sum_j d\zeta_j\wedge d\bar{\zeta}_j$ but $\sum_j1/|\zeta_j|^2d\zeta_j\wedge d\bar{\zeta}_j$.

The upper envelope $\tilde{V}=\sup\{u: u \in G_{\tilde{\nu}}\}$ is continuous in $D\times X$, attains the boundary data $\tilde{\nu}$, and is $\omega$-harmonic on graphs in $D\times X$ by Theorem  \ref{sup is har} and Lemma \ref{barrier}. 

The boundary data $\Tilde{\nu}$ is rotationally invariant, namely, $\tilde{\nu}(\lambda_1\zeta_1,\dots, \lambda_m\zeta_m)=\tilde{\nu}(\zeta_1,\dots, \zeta_m)$ with $|\lambda_j|=1$. We claim that $\Tilde{V}$ is also rotationally invariant. In fact, $\tilde{V}(\lambda_1\zeta_1,\dots, \lambda_m\zeta_m)$ with $|\lambda_j|=1$ is $\omega$-harmonic on graphs by Lemma \ref{lemma T} and shares the same boundary data with $\tilde{V}(\zeta_1,\dots, \zeta_m)$. Hence by Theorem \ref{dist subhar}, we have 
$\tilde{V}(\lambda_1\zeta_1,\dots, \lambda_m\zeta_m)=\tilde{V}(\zeta_1,\dots, \zeta_m)$, as claimed.

One can also show the rotational invariance of $\tilde{V}$ through quantization/approximation. In fact, let us consider the boundary data $H_k(\Tilde{\nu}):\partial D \to \mathcal H_k$ which can be viewed as a Hermitian metric on the bundle $\partial D \times H^0(X, L^k) \to \partial D $.  There exists a unique smooth Hermitian metric $\tilde{V}^k$ on the bundle $D\times H^0(X, L^k)\to D$ that solves the Hermitian--Yang--Mills equation (\cite{Donaldson92,CoifmanSemmes})
\begin{equation}\label{HYM}
\begin{cases}
\Lambda \Theta(\tilde{V}^k)=0\\
\tilde{V}^k|_{\partial D}=H_k(\Tilde{\nu}),
\end{cases}    
\end{equation}
where $\Theta(\tilde{V}^k)$ is the curvature of the Hermitian metric $\tilde{V}^k$ and $\Lambda$ is the trace with respect to the metric $\sum_j1/|\zeta_j|^2d\zeta_j\wedge d\bar{\zeta}_j$ on $D$.
Moreover, $FS_k(\tilde{V}^k)$ converges to $\Tilde{V}$ uniformly on $D\times X$ by \cite[Theorem 1.2]{Wu23} (when applying this theorem, one has to take an extra duality on the Hermitian metrics). The boundary data $H_k(\Tilde{\nu})$ is rotationally invariant, and so is $\tilde{V}^k$ due to the uniqueness of the solutions of the Hermitian--Yang--Mills equation. Hence, the limit $ \lim_{k\to \infty} FS_k(\tilde{V}^k)= \Tilde{V}$ is rotationally invariant. 

One way or another, we can define a function $V$ on $D'\times X$ by setting $$V(t_1,\dots,t_m, x)= \Tilde{V}(\zeta_1,\dots, \zeta_m, x)$$ where $t_j=\log |\zeta_j|$. Since $\tilde{V}$ is continuous, so is $V$. Meanwhile, we know that $\Tilde{V}|_{\partial D}=\tilde{\nu}$, so  
$V|_{\partial D'}=\nu$. Also, $\tilde{V}(\zeta, \cdot)$ is in $\text{PSH}(X,\omega)$ for fixed $\zeta$ by \cite[
Lemma 3.1]{Wu23}, hence we see that $V(t,\cdot)$ is in $\text{PSH}(X,\omega)$ for fixed $t$.

So far we have proved the first part of Theorem \ref{thm weak harmonic}. Now we move on to the quantization. If we use some basis for $H^0(X,L^k)$ as a global frame for the bundle $D\times H^0(X, L^k)\to D$ and represent the Hermitian metric $\tilde{V}^k$ as matrix-valued functions, then the Hermitian--Yang--Mills equation (\ref{HYM}) is   
\begin{equation}\label{HYM1}
    \sum_{j=1}^m |\zeta_j|^2 \frac{\partial}{\partial \Bar{\zeta}_j}\big((\tilde{V}^k)^{-1}  \frac{\partial \tilde{V}^k}{\partial \zeta_j}\big)=0;    
\end{equation}
note again that the metric we use on $D$ is $\sum_j1/|\zeta_j|^2d\zeta_j\wedge d\bar{\zeta}_j$.
Since $\tilde{V}^k$ is rotationally invariant and can be viewed as a map from $D$ to $ \mathcal H_k$, it gives rise to a map $V^k: D'\to \mathcal H_k$  with $$V^k(t_1,\dots, t_m)=\tilde{V}^k(\zeta_1,\dots, \zeta_m) \text{ and } t_j=\log |\zeta_j|. $$
By the chain rule 
\begin{equation}\label{chain rule}
 \frac{\partial  \tilde{V}^k   }{\partial \zeta_j}=\frac{\partial V^k}{\partial t_j}\frac{1}{2\zeta_j} \text{  and  } \frac{\partial^2 \tilde{V}^k}{\partial \zeta_j \partial \bar{\zeta}_j  }=\frac{\partial^2 V^k}{\partial t_j^2}\frac{1}{4|\zeta_j|^2},
\end{equation}
equation (\ref{HYM1}) becomes
\begin{equation}\label{harmonic for matrix}
    \sum_{j=1}^m \frac{\partial}{\partial t_j}\big((V^k)^{-1}  \frac{\partial V^k}{\partial t_j}\big)=0    
\end{equation}
which is the harmonic map equation for maps from $D'$ to $ \mathcal H_k$, namely (\ref{harmonic matrix 1}). The uniform convergence
$\lim_{k\to \infty} FS_k(\tilde{V}^k)= \Tilde{V}$ on $D\times X$ thus translates to the uniform convergence
$\lim_{k\to \infty} FS_k(V^k)= V$ on $D'\times X$.

Finally, if $V$ or $\tilde{V}$ is $C^2$, then $\tilde{V}$ solves the WZW equation by \cite[Theorem 1.3]{Wu23}
\begin{equation}\label{wzw variant 1}
\sum^m_{j=1}|\zeta_j|^2\big(|\nabla\Tilde{V}_{\zeta_j}|_{\Tilde{V}}^2-2\Tilde{V}_{\zeta_j\bar{\zeta}_j}+i\{\Tilde{V}_{\bar{\zeta}_j},\Tilde{V}_{{\zeta_j}}\}_{\Tilde{V}}\big)=0. 
\end{equation}
Using the chain rule similar to (\ref{chain rule}), we turn equation (\ref{wzw variant 1}) to   
\begin{equation}\label{harmonic eq}
\sum^m_{j=1}|\nabla V_{t_j}|_{V}^2-2V_{t_jt_j}+i\{V_{t_j},V_{{t_j}}\}_{V}=0;
\end{equation}
since the Poisson bracket $\{V_{t_j},V_{{t_j}}\}_{V}$ is zero, the equation (\ref{harmonic eq}) is the harmonic map equation (see equation (\ref{har})). 
\end{proof}

From the proof of Theorem \ref{thm weak harmonic}, we see that the weak solution $V$ of the harmonic map equations into $\mathcal H_\omega$ is given by the sup of the Perron family 
\begin{align*}
G_{\Tilde{\nu}}=\{u \in\text{usc($D\times X$) }:& \text{ $u$ is $\omega$-subharmonic on graphs}, \text{and }\limsup_{D \ni \zeta \to \eta\in \partial D}u(\zeta,x)  \leq \Tilde{\nu}(\eta,x)\}.
\end{align*}
Actually a subfamily will yield the same sup. Indeed, let us consider the rotationally invariant Perron family $G^{R.I.}_{\Tilde{\nu}}$ that consists of $u\in G_{\Tilde{\nu}}$ which are invariant under rotation:
$u(\lambda_1\zeta_1,\dots, \lambda_m\zeta_m)=u(\zeta_1,\dots, \zeta_m)$ with $|\lambda_j|=1$. We claim that $$\sup \{u: u\in G^{R.I.}_{\Tilde{\nu}}\}=\sup \{u: u\in G_{\Tilde{\nu}}\}.$$ Let us denote $\sup \{u: u\in G_{\Tilde{\nu}}\}$ by $\tilde{V}$. By Lemma \ref{barrier}, the envelope $\tilde V$ is in $G_{\Tilde{\nu}}$, but $\tilde V$ is rotationally invariant as is shown in the proof of Theorem \ref{thm weak harmonic}, so $\tilde V$ is in $G^{R.I.}_{\Tilde{\nu}}$. Therefore, we see
$$\tilde V \leq \sup \{u: u\in G^{R.I.}_{\Tilde{\nu}}\}\leq \sup \{u: u\in G_{\Tilde{\nu}}\}=\tilde V.$$
Such a characterization of harmonic maps is perhaps not that satisfactory because we still have to use $\omega$-subharmonicity which involves holomorphic terms.

Similarly, the solution $V^k$ of the harmonic map equations (\ref{harmonic matrix 1}) into $\mathcal H_k$  can also be written as the sup of some rotationally invariant Perron family. In fact, The solution $\tilde{V}^k$ of the Hermitian--Yang--Mills equation (\ref{HYM}) is the sup of the Perron family
\begin{align*}
  G^{k}_{\tilde \nu}:=\{&D \ni \zeta \to U_\zeta \in \mathcal N_k \textup{ is subharmonic and }\\ 
  &\limsup_{D\ni \zeta \to \eta \in \partial D} U^2_\zeta(s) \leq H_k(\tilde{\nu}_\eta)(s,s) \textup{ for any $s\in H^0(X,L^k)$ }  \}, 
\end{align*}
where $\mathcal N_k$ is the set of norms on $H^0(X,L^k)$ and $U_\zeta$ is said to be subharmonic if $\log U_\zeta(f(\zeta))$ is subharmonic for any holomorphic section $f:W\subset D \to H^0(X,L^k)$.
The rotationally invariant subfamily $(G^{k}_{\tilde \nu})^{R.I.}$ will yield the same sup by the same argument.

\section{Appendix}\label{sec appendix}

In this appendix, we prove the claim in Subsection \ref{subsection har} that, when $m=1$ and $D'$ is the open interval $(0,1)$, $V$ satisfies $d(V(s),V(t))=|s-t|d(V(0),V(1))$. Indeed, $V$ as the sup of the Perron family is $\omega$-harmonic on graphs, and $V(0)$ as a function on $D'\times X$, independent of the variable in $D'$, is also  $\omega$-harmonic on graphs (as if extending across $D'$ trivially), so by Theorem \ref{dist subhar} the function $t\mapsto d(V(0),V(t))$ is convex. Similarly, $t\mapsto d(V(1),V(t))$ is also convex. As result,
\begin{equation}\label{convex}
    d(V(0),V(t))\leq td(V(0),V(1)) \text{ and }
    d(V(t),V(1))\leq (1-t)d(V(0),V(1)).
\end{equation}
Adding these two inequalities and using the triangle inequality for $d$, we get
$$d(V(0),V(1))\leq d(V(0),V(t))+d(V(t),V(1))\leq d(V(0),V(1))$$
which forces the equality sign in (\ref{convex}). Next, we fix $s_0$ and vary $t$ between $s_0$ and 1. Since $V$ and $V(s_0)$ are both $\omega$-harmonic on graphs, we get by convexity
\begin{equation}\label{convex 2}
    d(V(s_0),V(t))\leq \frac{t-s_0}{1-s_0}d(V(s_0),V(1)) \text{ and }
    d(V(t),V(1))\leq \frac{1-t}{1-s_0}d(V(s_0),V(1)).
\end{equation}
Adding these two inequalities, we get
\begin{equation*}
    d(V(s_0),V(1)) \leq d(V(s_0),V(t))+d(V(t),V(1))\leq d(V(s_0),V(1)).
\end{equation*}
So, (\ref{convex 2}) are actually equalities. The second equality in (\ref{convex}) and the first equality in (\ref{convex 2}) together give  
$d(V(s_0),V(t))= (t-s_0))d(V(0),V(1))$ as claimed.

\bibliographystyle{amsalpha}
\bibliography{dom}

\textsc{Erdős Center, HUN-REN Rényi Institute, Reáltanoda utca 14, H-1053, Budapest, Hungary}

\texttt{\textbf{wuuuruuu@gmail.com}}

\end{document}